\documentclass[11pt, twoside]{article}
\usepackage{amsfonts}
\usepackage{ulem}
\usepackage{amssymb}
\usepackage{amsmath}
\usepackage{amsthm}
\usepackage{xcolor}
\usepackage{mathrsfs}
\usepackage[misc]{ifsym}
\usepackage{verbatim}

\allowdisplaybreaks

\pagestyle{myheadings}\markboth{\footnotesize\rm\sc Xiong Hu, Xuebing Hao and Baode Li}{\footnotesize\rm\sc Endpoint estimates of discrete fractional operators}

\allowdisplaybreaks

\textwidth=15cm
\textheight=21.16cm
\oddsidemargin 0.25cm
\evensidemargin 0.25cm

\parindent=13pt

\newtheorem{theorem}{Theorem}[section]
\newtheorem{lemma}[theorem]{Lemma}
\newtheorem{corollary}[theorem]{Corollary}
\newtheorem{proposition}[theorem]{Proposition}
\theoremstyle{definition}

\newtheorem{remark}[theorem]{Remark}

\newtheorem{definition}[theorem]{Definition}
\numberwithin{equation}{section}

\numberwithin{equation}{section}

%%%%%%%%%%%%%%%%%%%%%%%%%%%%%%%%%%%%%%%%%%%%%%%%%%%%%%%%%%%%%%%%%%%%%
%%%%%%%%%%%%%%%%%%%%%%%%%%%%%%%%%%%%%%%%%%%%%%%%%%%%%%%%%%%%%%%%%%%%%%

\numberwithin{equation}{section}

\begin{document}

\arraycolsep=1pt

\title{\Large\bf Endpoint estimates of discrete fractional operators on discrete weighted Lebesgue spaces \footnotetext{\hspace{-0.35cm} {\it 2020
Mathematics Subject Classification}. {46B45, 42B20, 42B25.}
\endgraf{\it Key words and phrases.} weight, endpoint estimate, discrete fractional maximal operator, discrete Riesz potential.
\endgraf This project is supported by the National Natural Science Foundation of China (Grant Nos. 12261083 \& 12161083) and Xinjiang Key Laboratory of Applied Mathematics (No. XJDX1401).
\endgraf $^\ast$\,Corresponding author.
\endgraf \quad~{Xiong Hu}
\endgraf \quad~{xionghu92@qq.com}
\medskip
\endgraf \quad~{Xuebing Hao}
\endgraf \quad~{1659230998@qq.com}
\medskip
\endgraf ${\textrm{\Letter}}$ Baode Li
\endgraf \quad~{baodeli@xju.edu.cn}
\medskip
\endgraf College of Mathematics and System Sciences, Xinjiang University, Urumqi, 830017, P. R. China
}}
\author{Xiong Hu, Xuebing Hao and Baode Li$^\ast$}
\date{ }
\maketitle

\vspace{-0.8cm}

\begin{center}
\begin{minipage}{13cm}\small
{\noindent{\bf Abstract.}
Let $0<\alpha<1$ and $\frac{1}{q}=1-\alpha$. We first obtain that the function $\omega :\mathbb{Z} \rightarrow (0,\infty)$ belongs to weight class of $\mathcal{A} (1,q)(\mathbb{Z})$ if and only if discrete fractional maximal operator $M_{\alpha}$ or discrete Riesz potential $I_\alpha$ is bounded from $l_{\omega}^{1}(\mathbb{Z})$ to $l_{\omega^q}^{q,weak}(\mathbb{Z})$. Then for $p=\frac{1}{\alpha}$, we further obtain that the function $\omega$ belongs to weight class of $\mathcal{A} (p,\infty)(\mathbb{Z})$ if and only if discrete Riesz potential $I_\alpha$ has a property resembling discrete bounded mean oscillation. Moreover, we give another simple proof of $I_{\alpha}:l_{\omega ^p}^{p}(\mathbb{Z}) \rightarrow l_{\omega ^q}^{q}(\mathbb{Z})$ for $\omega \in \mathcal{A}(p,q)(\mathbb{Z})$, $1<p<\frac{1}{\alpha}$ and $\frac{1}{q}=\frac{1}{p}-\alpha$. As applications, more weighted norm inequalities for $M_{\alpha}$ and $I_\alpha$ are established when $\omega \in \mathcal{A}(1,q)(\mathbb{Z})$ or $\omega \in \mathcal{A}(p,\infty)(\mathbb{Z})$, and some of them are new even in continuous setting.}
\end{minipage}
\end{center}
%%%%%%%%%%%%%%%%%%%%%%%%%%%%%%%%%%%%%%%%%%%%%%%%%%%%%%%%%%%%%%%%%%%%%%%%%
%%%%%%%%%%%%%%%%%%%%%%%%%%%  1.  Preliminaries  %%%%%%%%%%%%%%%%%%%%%%%%%
%%%%%%%%%%%%%%%%%%%%%%%%%%%%%%%%%%%%%%%%%%%%%%%%%%%%%%%%%%%%%%%%%%%%%%%%%

\section{Introduction\label{s1}}
Let $0<\alpha<1$. Define Riesz potential $\mathcal{I}_{\alpha}f(x) :=\int_{\mathbb{R}^n}{f(y)|x-y|^{\alpha -n}dy}$ and fractional maximal function $\mathcal{M}_{\alpha}f(x) :=\underset{Q}{\mathrm{sup}}\ |Q|^{-1+{{\alpha}/{n}}}\int_Q{|f(y)|dy}$, where the supremum is taken over all cubes $Q$ with center at $x$ and $|Q|$ denotes the measure of $Q$. Suppose that $1<p,q<\infty$ and $\omega(x)$ is a nonnegative locally integrable function on $\mathbb{R}^n$. Define $\omega \in \mathcal{A}(p,q)$, if
\begin{equation*}
\underset{Q}{\mathrm{sup}}\left( \frac{1}{|Q|}\int_Q{\omega}(x)^qdx \right) ^{\frac{1}{q}}\left( \frac{1}{|Q|}\int_Q{\omega}(x)^{-p\prime}dx \right) ^{\frac{1}{p\prime}}<\infty,
\end{equation*}
where the supremum is taken over all cubes $Q$ in $\mathbb{R}^n$ and $p^{\prime}:={p}/(p-1)$. In 1974, Muckenhoupt and Wheeden \cite{1974m} characterized the weighted strong type inequality for fractional maximal operator and Riesz potential in terms of weight class of $\mathcal{A}(p,q)$, where $1<p<\frac{1}{\alpha}$ and $\frac{1}{q}=\frac{1}{p}- \alpha$. They showed that the inequality
\begin{equation*}
\left\| T_{\alpha}f \right\| _{L_{\omega ^q}^{q}}\le C\left\| f \right\| _{L_{\omega ^p}^{p}},
\end{equation*}
where $T_{\alpha}$ is the operator $\mathcal{I}_{\alpha}$ or $\mathcal{M}_{\alpha}$, holds if and only if $\omega \in \mathcal{A}(p,q)$. Moreover, in 1975, Welland \cite{welland} gave a simple proof of weighted strong type inequality for Riesz potential in terms of $\mathcal{A} ( p,q )$, and also gave certain results for $p=1$ and $q=\infty$.

It is well known that fractional maximal operator and Riesz potential play important roles in various branches of analysis, including potential theory, harmonic analysis, Sobolev spaces, partial differential equations and can be treated as special singular integrals. That is why it is important to study their boundedness between different spaces; see \cite{adams, burtseva, norm, 1928, 1974m, lsHardy22, stein1, stein2}.

Modern discrete analogues in harmonic analysis developed out of work of Jean Bourgain in the late 1980s and early 1990s, and Eli Stein and his longterm collaborators continuing into this century. For example, the study of boundedness of discrete operators on $l^p$ analogues for $L^p$-boundedness has been considered by some authors; see \cite{anupindi, wiener20, guliyev, krause, 2009p}.

The aim of this article is to obtain some weighted estimates for discrete fractional maximal operator and discrete Riesz potential. Let begin with some related definitions.

Let $S_{m,N}:=\{k\in \mathbb{Z} :|k-m|\leq N \}$, $m\in \mathbb{Z}$, $N\in \mathbb{N}$. Then $|S_{m,N}|:=2N+1$$-$the cardinality of $S_{m,N}$.

\begin{definition}\label{d1.1}\cite[Definition 2.1]{anupindi}
Let $0\leq \alpha <1$ and $x=\{x(k)\} _{k\in \mathbb{Z}}\subset \mathbb{R}$ be a sequence. The {\it discrete fractional maximal operator} $M_{\alpha}$ is defined by
\begin{equation*}
M_{\alpha}x\left( m \right) :=\underset{N\in \mathbb{N}}{\mathrm{sup}}\ \frac{1}{\left| S_{m,N} \right|^{1-\alpha}}\sum_{k\in S_{m,N}}{\left| x\left( k \right) \right|},\enspace m\in \mathbb{Z}.
\end{equation*}
\end{definition}

\begin{definition}\label{d1.2}\cite[Page 8]{Schwanke}
Let $0<\alpha <1$ and $x=\{ x( k ) \} _{k\in \mathbb{Z}}\subset \mathbb{R}$ be a sequence. The {\it discrete Riesz potential} $I_{\alpha}$ is defined by
\begin{equation*}
I_{\alpha}x\left( k \right) :=\sum_{i\in \mathbb{Z} \setminus \left\{ k \right\}}{\frac{x\left( i \right)}{\left| k-i \right|^{1-\alpha}}},\enspace k\in \mathbb{Z}.
\end{equation*}
\end{definition}

In 2024, Hao-Yang-Li \cite{hao2} obtained the weighted strong type inequalities for discrete fractional maximal operator $M_{\alpha}$ and discrete Riesz potential $I_{\alpha}$ on discrete weighted Lebesgue spaces in terms of $\mathcal{A}(p,q)(\mathbb{Z})$ (see Definition 2.2); see the following Theorems \ref{t1.3} and \ref{t1.4}.

\begin{theorem}\label{t1.3}\cite[Theorem 3.3]{hao2}
Let $0<\alpha<1$, $1<p<\frac{1}{\alpha}$ and $\frac{1}{q}=\frac{1}{p}- \alpha$. Then $M_{\alpha}$ is bounded from $l_{\omega^p}^{p}$ to $l_{\omega ^q}^{q}$ if and only if $\omega \in \mathcal{A}(p,q)$, and there exists a positive constant $C$ such that
\begin{equation}\label{eq1.1}
\left\| M_{\alpha}x \right\| _{l_{\omega ^q}^{q}}\leq C\left\| x \right\| _{l_{\omega ^p}^{p}}.
\end{equation}
\end{theorem}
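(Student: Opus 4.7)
The plan is to adapt the classical Muckenhoupt--Wheeden argument for the continuous fractional maximal operator to the lattice $\mathbb{Z}$, which splits cleanly into a necessity direction and a sufficiency direction.

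For necessity, I would fix an arbitrary interval $S:=S_{m_0,N}$ and test \eqref{eq1.1} against the sequence $x:=\omega^{-p'}\chi_{S}$. For every $m\in S$ the inclusion $S\subseteq S_{m,2N}$ gives
$$M_{\alpha}x(m)\geq \frac{1}{|S_{m,2N}|^{1-\alpha}}\sum_{k\in S}\omega(k)^{-p'}\gtrsim |S|^{\alpha-1}\sum_{k\in S}\omega(k)^{-p'},$$
while the algebraic identity $-pp'+p=-p'$ gives $\|x\|_{l_{\omega^{p}}^{p}}^{p}=\sum_{k\in S}\omega(k)^{-p'}$. Substituting both into \eqref{eq1.1} and rearranging reproduces the $\mathcal{A}(p,q)(\mathbb{Z})$ testing inequality on $S$; since $S$ was arbitrary, $\omega\in \mathcal{A}(p,q)(\mathbb{Z})$.

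For sufficiency, assume $\omega\in \mathcal{A}(p,q)(\mathbb{Z})$. Splitting $|x|=(|x|\omega)\cdot \omega^{-1}$ and applying H\"older's inequality with exponents $(p,p')$ produces the pointwise bound
$$|S_{m,N}|^{\alpha-1}\sum_{k\in S_{m,N}}|x(k)|\leq |S_{m,N}|^{\alpha-1}\bigg(\sum_{k\in S_{m,N}}|x(k)|^{p}\omega(k)^{p}\bigg)^{1/p}\bigg(\sum_{k\in S_{m,N}}\omega(k)^{-p'}\bigg)^{1/p'}.$$
Controlling the $\omega^{-p'}$-factor via the $\mathcal{A}(p,q)$ condition, and using the identity $1/p'+\alpha-1=-1/q$, the right-hand side collapses to $C\big(\sum_{S_{m,N}}|x|^{p}\omega^{p}\big)^{1/p}\big(\sum_{S_{m,N}}\omega^{q}\big)^{-1/q}$. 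Taking the supremum in $N$, raising to the $q$th power, multiplying by $\omega(m)^{q}$, and summing over $m\in \mathbb{Z}$ then reduces \eqref{eq1.1} to a weighted strong-type bound for a Hardy--Littlewood-type maximal operator on the measure space $(\mathbb{Z},\omega^{q})$, which follows from the discrete Muckenhoupt $A_{r}$ theorem via the standard equivalence $\omega\in \mathcal{A}(p,q)(\mathbb{Z})\iff \omega^{q}\in A_{1+q/p'}(\mathbb{Z})$.

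The main technical obstacle will be the final reduction in the sufficiency step: one must carefully track the exponents $q/p$ and $q\alpha$ so that the residual maximal object is of the usual (non-fractional) type before invoking the weighted $A_{r}$ theorem. The companion $\mathcal{A}(p,q)$--$A_{r}$ equivalence on $\mathbb{Z}$ and the one-dimensional discrete covering lemma underpinning the weighted maximal bound are routine in dimension one, but must be verified directly from the definitions rather than imported from the continuous theory.
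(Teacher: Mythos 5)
Your necessity argument is fine: testing \eqref{eq1.1} on $x=\omega^{-p'}\mathcal{X}_{S}$, using $S\subseteq S_{m,2N}$ and $-pp'+p=-p'$, is exactly the standard testing argument (it is the same computation this paper runs in the proof of Theorem \ref{t1.4}(ii) and in the sufficiency half of Theorem \ref{t3.2}). Note, though, that the paper does not actually prove Theorem \ref{t1.3} here at all --- it is imported from \cite[Theorem~3.3]{hao2} --- so the only comparison available is with the techniques the paper deploys for the neighbouring results, namely a covering-lemma weak-type estimate (Lemma \ref{l3.4} and the proof of Theorem \ref{t3.2}) combined with the openness property of $\mathcal{A}(p,q)$ (Proposition \ref{p2.3}(iii)) and interpolation.

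The sufficiency half of your proposal has a genuine gap, and it sits exactly where you flagged the ``main technical obstacle.'' After H\"older with exponent $p$ and the $\mathcal{A}(p,q)$ condition, the residual object is
\begin{equation*}
M_{\alpha}x(m)^{q}\;\lesssim\;\sup_{N}\,\frac{1}{\mu(S_{m,N})}\Bigg(\sum_{k\in S_{m,N}}|x(k)|^{p}\omega(k)^{p}\Bigg)^{q/p},\qquad \mu:=\omega^{q},
\end{equation*}
and no bookkeeping of the exponents $q/p$ and $q\alpha$ turns this into the ordinary (non-fractional) Hardy--Littlewood maximal operator on $(\mathbb{Z},\mu)$: because $q/p>1$, what you need is a strong $(1,q/p)$ bound for a \emph{fractional} maximal operator relative to $\mu$ applied to the $\ell^{1}$ sequence $h=|x|^{p}\omega^{p}$, and that bound is false. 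Concretely, already for $\omega\equiv 1$ and $x=\mathcal{X}_{\{0\}}$ the H\"older majorant above equals $(2|m|+1)^{-1}$ after raising to the power $q$, which is not summable over $m\in\mathbb{Z}$, whereas $M_{\alpha}x(m)^{q}\sim(2|m|+1)^{(\alpha-1)q}$ with $(\alpha-1)q<-1$ is summable; the H\"older step at exponent $p$ simply loses too much. (The same reduction does legitimately yield the \emph{weak} $(p,q)$ inequality, via the weak $(1,1)$ bound for $M_{\mu}$.) To close the argument you need the self-improvement $\omega\in\mathcal{A}(p_{1},q_{1})$ for some $p_{1}<p$ (Proposition \ref{p2.3}(iii), which rests on the discrete reverse H\"older inequality) and then either Marcinkiewicz-type interpolation between the two weak-type bounds at $p_{1}<p<p_{2}$, or a redone pointwise estimate at the lower exponent $p_{1}$ so that the leftover maximal operator acts on an $\ell^{p/p_{1}}$-type quantity with $p/p_{1}>1$. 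As written, the final ``weighted $A_{r}$ theorem'' you invoke does not apply to the object your reduction produces.
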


\begin{theorem}\label{t1.4}\cite[Theorem 4.4]{hao2}
Let $0<\alpha<1$, $1<p<\frac{1}{\alpha}$ and $\frac{1}{q}=\frac{1}{p}- \alpha$.
\begin{itemize}
\item[\rm(i)] If $\omega \in \mathcal{A}(p,q)$, then $I_\alpha$ is bounded from $l_{\omega^p}^{p}$ to $l_{\omega^q}^{q}$.
\item[\rm(ii)] Conversely, if $\omega$ is a discrete weighted sequence which is monotonic on $\mathrm{supp}\,\omega$ and $I_\alpha$ is bounded from $l_{\omega ^p}^{p}$ to $l_{\omega ^q}^{q}$, then $\omega \in \mathcal{A}(p,q)$.
\end{itemize}
\end{theorem}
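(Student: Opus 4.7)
The plan is to handle the two directions separately: the forward implication (i) via a Welland-type pointwise domination reducing $I_\alpha$ to discrete fractional maximal operators of nearby orders, and the converse (ii) by a direct test-sequence argument in which the monotonicity hypothesis is used to absorb the awkward ``$i\neq k$'' omission built into the definition of $I_\alpha$.

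For part (i), fix $\beta\in(0,\min\{\alpha,1-\alpha\})$, to be chosen later, and split $I_\alpha x(k)=\sum_{i\ne k}|k-i|^{\alpha-1}x(i)$ along the dyadic shells $\{i:2^{j-1}<|k-i|\le 2^j\}$. A direct estimate dominates each shell by $2^{-j\beta}M_{\alpha+\beta}x(k)$ (using $1-\alpha-\beta>0$) and also by $2^{j\beta}M_{\alpha-\beta}x(k)$ (using $1-\alpha+\beta<1$); summing the two resulting geometric series and optimising the crossover index yields
\[
|I_\alpha x(k)|\;\lesssim\;\bigl(M_{\alpha+\beta}x(k)\bigr)^{1/2}\bigl(M_{\alpha-\beta}x(k)\bigr)^{1/2},\qquad k\in\mathbb Z.
\]
Define $q_1,q_2$ by $\tfrac{1}{q_i}=\tfrac{1}{p}-\alpha_i$ with $\alpha_1=\alpha+\beta$ and $\alpha_2=\alpha-\beta$, so that $\tfrac{1}{q_1}+\tfrac{1}{q_2}=\tfrac{2}{q}$. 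Hölder's inequality in $l^{q/2}_{\omega^q}(\mathbb Z)$ then gives
\[
\|I_\alpha x\|_{l^q_{\omega^q}}\;\lesssim\;\|M_{\alpha+\beta}x\|_{l^{q_1}_{\omega^{q_1}}}^{1/2}\,\|M_{\alpha-\beta}x\|_{l^{q_2}_{\omega^{q_2}}}^{1/2},
\]
and two applications of Theorem~\ref{t1.3} close the argument, provided $\omega$ also lies in $\mathcal A(p,q_1)(\mathbb Z)\cap\mathcal A(p,q_2)(\mathbb Z)$. I expect this last inclusion to be the main obstacle: one needs a discrete openness / reverse-Hölder property for the $\mathcal A(p,q)(\mathbb Z)$ scale ensuring that small perturbations of $q$ remain inside the class, after which $\beta$ is chosen small enough to accommodate both $q_1$ and $q_2$.

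For part (ii), fix $m\in\mathbb Z$, $N\in\mathbb N$ and test the hypothesised boundedness against $x_{m,N}(i):=\chi_{S_{m,N}}(i)\,\omega(i)^{-p'}$. A direct computation gives $\|x_{m,N}\|_{l^p_{\omega^p}}^p=\sum_{i\in S_{m,N}}\omega(i)^{-p'}$; and since $|k-i|\le 2N\simeq|S_{m,N}|$ whenever $k,i\in S_{m,N}$,
\[
I_\alpha x_{m,N}(k)\;\gtrsim\;|S_{m,N}|^{\alpha-1}\sum_{i\in S_{m,N}\setminus\{k\}}\omega(i)^{-p'},\qquad k\in S_{m,N}.
\]
Substituting into $\|I_\alpha x_{m,N}\|_{l^q_{\omega^q}}\le C\|x_{m,N}\|_{l^p_{\omega^p}}$ and rearranging reproduces the $\mathcal A(p,q)(\mathbb Z)$ quotient on the window $S_{m,N}$, up to the omission of the single term $i=k$ in the inner sum. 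The hypothesis that $\omega$ is monotonic on $\mathrm{supp}\,\omega$ is tailored precisely to this defect: monotonicity forces $\mathrm{supp}\,\omega$ to be a sub-interval of $\mathbb Z$ and lets one discard the extremal point of $S_{m,N}$ at which $\omega(k)^{-p'}$ is maximal, ensuring that for all remaining $k$ the residual sum $\sum_{i\ne k}\omega(i)^{-p'}$ is comparable to the full sum $\sum_{i\in S_{m,N}}\omega(i)^{-p'}$. Taking the supremum over $m,N$ then yields $\omega\in\mathcal A(p,q)(\mathbb Z)$, completing the converse.
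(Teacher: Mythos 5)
Your part (i) follows essentially the same route as the paper: the identical Welland-type two-sided estimate (small scales controlled by $M_{\alpha-\epsilon}$, large scales by $M_{\alpha+\epsilon}$, then optimization of the crossover) giving $|I_{\alpha}x(k)|\le C\left[M_{\alpha+\epsilon}x(k)\,M_{\alpha-\epsilon}x(k)\right]^{1/2}$, followed by H\"older with exponents $2q_{\epsilon}/q$ and $2\widetilde{q}_{\epsilon}/q$ and two applications of Theorem \ref{t1.3}. The ``openness'' you flag as the remaining obstacle is exactly what the paper supplies via Proposition \ref{p2.3}(i)--(ii): $\omega\in\mathcal{A}(p,q)$ makes $\omega^{q}$ satisfy a discrete reverse H\"older inequality, which yields $\omega\in\mathcal{A}(p,q_{\epsilon})$ for $q_{\epsilon}$ slightly larger than $q$, while the case $\widetilde{q}_{\epsilon}<q$ is plain H\"older. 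So (i) is correct once that self-improvement is cited.

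Your part (ii) is a genuinely different and more elementary argument than the paper's. The paper tests the same sequence $x=\omega^{-p^{\prime}}\mathcal{X}_{S}$ but detours through the maximal function: $\overline{M}_{\alpha}x\ge 2\lambda$ on $S$, Lemma \ref{l3.3}, and crucially Lemma \ref{l3.6} (namely $M_{\alpha}x\le 2I_{\alpha}(|x|)$ when $|x|$ is monotone on its support), which is precisely where the monotonicity of $\omega$ is consumed. You instead bound $I_{\alpha}x(k)$ from below directly by $|S|^{\alpha-1}\sum_{i\in S\setminus\{k\}}\omega(i)^{-p^{\prime}}$ and handle the excluded diagonal term by discarding one point. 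Two remarks. First, your appeal to monotonicity is not what makes this step work: for an arbitrary weight, if $i_{0}$ maximizes $\omega(i)^{-p^{\prime}}$ over $S$, then for every $k\ne i_{0}$ one has $2\sum_{i\in S\setminus\{k\}}\omega(i)^{-p^{\prime}}\ge\omega(i_{0})^{-p^{\prime}}+\bigl(\sum_{i\in S}\omega(i)^{-p^{\prime}}-\omega(k)^{-p^{\prime}}\bigr)\ge\sum_{i\in S}\omega(i)^{-p^{\prime}}$, so the residual sum is comparable to the full one with no monotonicity whatsoever. Second, you must also verify that dropping $k=i_{0}$ from the outer sum $\sum_{k\in S}\omega(k)^{q}$ loses nothing essential; this holds because $\omega(i_{0})=\min_{S}\omega$, hence $\omega(i_{0})^{q}\le|S|^{-1}\sum_{k\in S}\omega(k)^{q}$. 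With these two details supplied your converse closes, and it does so without the monotonicity hypothesis, so your argument in fact establishes a slightly stronger statement than Theorem \ref{t1.4}(ii) as stated.
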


The weighted endpoint estimates for $M_{\alpha}$ and $I_{\alpha}$ have not been studied. Inspired by \cite{1974m}, \cite{welland} and \cite{hao2}, the following natural question appeared:

Can we prove the endpoint estimates of discrete fractional maximal operator $M_{\alpha}$ and discrete Riesz potential $I_\alpha$ on discrete weighted Lebesgue spaces for the case $\omega\in \mathcal{A}(1,q)(\mathbb{Z})$ or $\omega\in \mathcal{A}(p,\infty)(\mathbb{Z})$?

Our aim is to give an affirmative answer to the above question which promotes the development of the applications of discrete Muckenhoupt weights, discrete Lebesgue spaces to discrete harmonic analysis.

This article is organized as follows. In Sect.\,\ref{s2}, we mainly recall discrete Muckenhoupt weight classes and give some related properties. In Sect.\,\ref{s3}, inspired by the boundedness of fractional maximal operator and Riesz potential on weighted Lebesgue spaces of Muckenhoupt and Wheeden \cite{1974m}, we obtain that $\omega \in \mathcal{A}(1,q)(\mathbb{Z})$ if and only if discrete fractional maximal operator $M_{\alpha}$ or discrete Riesz potential $I_\alpha$ is bounded from $l_{\omega}^{1}(\mathbb{Z})$ to $l_{\omega^q}^{q,weak}(\mathbb{Z})$, $\frac{1}{q}=1-\alpha$. Here it is worth noticing that we need the assumption ``$\omega$ is monotonic on $\mathrm{supp}\,\omega$'' to guarantee $I_\alpha: l_{\omega^p}^{p}(\mathbb{Z}) \rightarrow l_{\omega^q}^{q}(\mathbb{Z}) \Rightarrow \omega\in\mathcal{A}(p,q)(\mathbb{Z})$ (see Theorem \ref{t1.4}(ii)). But we don't need that assumption to guarantee $I_\alpha: l_{\omega}^{1} \rightarrow l_{\omega^q}^{q,weak} \Rightarrow \omega\in\mathcal{A}(1,q)(\mathbb{Z})$ (see Theorem \ref{t3.5} below). We further obtain that $\omega \in \mathcal{A}(p,\infty)(\mathbb{Z})$ if and only if discrete Riesz potential $I_\alpha$ has a property resembling discrete bounded mean oscillation. As applications, in Sect.\,\ref{s4}, more weighted norm inequalities for $M_{\alpha}$ and $I_\alpha$ are established with $\omega \in \mathcal{A}(1,q)(\mathbb{Z})$ or $\omega \in \mathcal{A}(p,\infty)(\mathbb{Z})$. For example, if $\omega \in \mathcal{A}(1,q_0)(\mathbb{Z})$, $\frac{1}{q_0}=1-\alpha$, then $M_{\alpha}:l_{\omega ^p}^{p}(\mathbb{Z}) \rightarrow l_{\omega ^q}^{q}(\mathbb{Z})$, where $(p,q)$ is near $(1,q_0)$ (see Theorem \ref{t4.1}). This seems new even in continuous setting. Moreover, comparing to the proof of \cite[Theorem 4.4(i)]{hao2}, i.e., Theorem 1.4(i), we give another simple proof of $I_{\alpha}:l_{\omega ^p}^{p}(\mathbb{Z}) \rightarrow l_{\omega ^q}^{q}(\mathbb{Z})$, and we correct the proof of \cite[Theorem 4.4(ii)]{hao2}, i.e., Theorem \ref{t1.4}(ii), where $\omega \in \mathcal{A}(p,q)(\mathbb{Z})$, $1<p<\frac{1}{\alpha}$ and $\frac{1}{q}=\frac{1}{p}-\alpha$.

\section*{Notations}

\ \,\ \ $-\mathbb{Z}:$ the set of integers;

$-\mathbb{Z} _+:=\{ 1,2,3,\dots \}$;

$-\mathbb{N}:=\{ 0,1,2,\dots \}$;

$-J:$ the interval of finite consecutive integers, i.e., $J=\{ m,m+1,\dots ,m+n \}$, $m\in \mathbb{Z} $, $n\in \mathbb{Z_+} $;

$-\mathcal{X} _J:$ the characteristic function of interval $J$;

$-\lfloor k \rfloor:$ the nearest integer less than or equal to $k$;

$-\lambda S_{m,N}:=\{ k\in \mathbb{Z} :|k-m|\leq \lambda N \}$, where $\lambda\in \mathbb{Z_+}$, $m\in \mathbb{Z}$ and $N\in \mathbb{N}$;

$-\omega (J) :=\sum_{k\in J}{\omega (k)}$, for any interval $J\subset\mathbb{Z}$;

$-\mathrm{supp}\,x:=\{ k\in \mathbb{Z} :x(k) \ne 0 \}$;

$-E^c:$ the complementary set of $E$;

$-p^{\prime}:$ the conjugate exponent of $p$, namely, $\frac{1}{p}+\frac{1}{p^{\prime}}=1$;

$-C:$ a positive constant which is independent of the main parameters, but it may vary from line to line.

%%%%%%%%%%%%%%%%%%%%%%%%%%%%%%%%%%%%%%%%%%%%%%%%%%%%%%%%%%%%%%%%%%%%%

%%%%%%%%%%%%%%%%%%%%%%%  section 2  %%%%%%%%%%%%%%%%%%%%%%%%%%%%%%%%%

%%%%%%%%%%%%%%%%%%%%%%%%%%%%%%%%%%%%%%%%%%%%%%%%%%%%%%%%%%%%%%%%%%%%%

\section{Preliminaries \label{s2}}
In this section, we mainly recall discrete Muckenhoupt weight classes and give some related properties.

\begin{definition}\label{d2.1}\cite[Definition 2.7]{hao1}
A discrete weight $\omega:\mathbb{Z}\rightarrow(0,\infty)$ is said to belong to the {\it discrete Muckenhoupt class} $\mathcal{A}_1:=\mathcal{A}_1(\mathbb{Z})$ if
\begin{equation*}
\left\| \omega \right\| _{\mathcal{A} _1\left( \mathbb{Z} \right)}:=\underset{m\in \mathbb{Z} ,N\in \mathbb{N}}{\mathrm{sup}}\,\,\frac{1}{\left| S_{m,N} \right|}\Bigg( \frac{1}{\underset{k\in S_{m,N}}{\mathrm{inf}}\,\omega \left( k \right)}\sum_{k\in S_{m,N}}{\omega \left( k \right)} \Bigg) <\infty.
\end{equation*}
For $1<p<\infty$, a discrete weight $\omega:\mathbb{Z}\rightarrow(0,\infty)$ is said to belong to the {\it discrete Muckenhoupt class} $\mathcal{A}_p:=\mathcal{A}_p( \mathbb{Z})$ if
\begin{equation*}
\left\| \omega \right\| _{\mathcal{A} _p\left( \mathbb{Z} \right)}:=\underset{m\in \mathbb{Z} ,N\in \mathbb{N}}{\mathrm{sup}}\bigg( \frac{1}{\left| S_{m,N} \right|}\sum_{k\in S_{m,N}}{\omega \left( k \right)} \bigg) \bigg( \frac{1}{\left| S_{m,N} \right|}\sum_{k\in S_{m,N}}{\omega \left( k \right) ^{\frac{-1}{p-1}}} \bigg) ^{p-1}<\infty.
\end{equation*}
Define $\mathcal{A}_{\infty}:=\bigcup_{1\leq p<\infty}{\mathcal{A}_p}$.
\end{definition}

The following definition is a discrete variant of Muckenhoupt $\mathcal{A}(p,q)$ class (see \cite[Page 261]{1974m}), which is also introduced in \cite[Definition 2.6]{hao2}.

\begin{definition}\label{d2.2}
A discrete weight $\omega:\mathbb{Z}\rightarrow(0,\infty)$ is said to belong to $\mathcal{A}(p,q)$ on $\mathbb{Z}$ for $1<p,q<\infty$ if
\begin{equation*}
\left\| \omega \right\| _{\mathcal{A} \left( p,q \right) \left( \mathbb{Z} \right)}:=\underset{m\in \mathbb{Z} ,N\in \mathbb{N}}{\mathrm{sup}}\bigg( \frac{1}{\left| S_{m,N} \right|}\sum_{k\in S_{m,N}}{\omega \left( k \right) ^q} \bigg) ^{\small{\frac{1}{q}}}\bigg( \frac{1}{\left| S_{m,N} \right|}\sum_{k\in S_{m,N}}{\omega \left( k \right) ^{-p^{\prime}}} \bigg) ^{\small{\frac{1}{p^{\prime}}}}<\infty.
\end{equation*}
Particularly, when $p=1$,
\begin{equation*}
\left\| \omega \right\| _{\mathcal{A} \left( 1,q \right) \left( \mathbb{Z} \right)}:=\underset{m\in \mathbb{Z} ,N\in \mathbb{N}}{\mathrm{sup}}\bigg( \frac{1}{\left| S_{m,N} \right|}\sum_{k\in S_{m,N}}{\omega \left( k \right) ^q} \bigg) ^{\small{\frac{1}{q}}}\bigg( \underset{k\in S_{m,N}}{\mathrm{sup}}\,\,\frac{1}{\omega \left( k \right)} \bigg) <\infty.
\end{equation*}
When $q=\infty$,
\begin{equation*}
\left\| \omega \right\| _{\mathcal{A} \left( p,\infty \right) \left( \mathbb{Z} \right)}:=\underset{m\in \mathbb{Z} ,N\in \mathbb{N}}{\mathrm{sup}}\bigg( \underset{k\in S_{m,N}}{\mathrm{sup}}\,\,\omega \left( k \right) \bigg) \bigg( \frac{1}{\left| S_{m,N} \right|}\sum_{k\in S_{m,N}}{\omega \left( k \right) ^{-p^{\prime}}} \bigg) ^{\small{\frac{1}{p^{\prime}}}}<\infty.
\end{equation*}
\end{definition}

\begin{proposition}\label{p2.3}
Let $0<\alpha<1$.
\begin{itemize}
\item[$\rm(i)$]\cite[Page 139]{lu}
If $1<p,q<\infty$, then $\omega \in \mathcal{A}(p,q)\Longleftrightarrow \omega ^q\in \mathcal{A} _{1+\frac{q}{p\prime}}\Longleftrightarrow \omega ^{-p\prime}\in \mathcal{A}_{1+\frac{p\prime}{q}}$. Particularly, $\omega \in \mathcal{A}(1,q)\Longleftrightarrow \omega ^q\in \mathcal{A} _1$, $\omega \in \mathcal{A}(p,\infty)\Longleftrightarrow \omega ^{-p\prime}\in \mathcal{A} _1$.
\item[$\rm(ii)$]\cite[Proposition 2.16(iii)]{hao1}\ (Discrete reverse H\"older's inequality)
If $\omega \in \mathcal{A} _p$ for some $1\leq p<\infty$, then there exists a constant $r \in (1, \infty)$ such that $\omega \in RH_r$, i.e., for any symmetric interval $S_{m,N}\subset\mathbb{Z}$, it holds true that
\begin{equation*}
\bigg( \frac{1}{\left| S_{m,N} \right|}\sum_{k\in S_{m,N}}{\omega \left( k \right) ^r} \bigg) ^{\frac{1}{r}}\leq \frac{C}{\left| S_{m,N} \right|}\sum_{k\in S_{m,N}}{\omega \left( k \right)}.
\end{equation*}
\item[$\rm(iii)$]
Let $1<p<\frac{1}{\alpha}$ and $\frac{1}{q}=\frac{1}{p}- \alpha$. If $\omega \in \mathcal{A}(p,q)$, then there exist real numbers $p_1$, $p_2$, $q_1$ and $q_2$ such that $1<p_1<p<p_2<\frac{1}{\alpha}$, $\frac{1}{q_i}=\frac{1}{p_i}-\alpha$ and $\omega \in \mathcal{A}(p_i,q_i)$, $i=1,2$.
\item[$\rm(iv)$]
Let $\frac{1}{q_0}=1-\alpha$. If $\omega\in \mathcal{A}(1,q_0)$, then there exist real numbers $p_1\in(1,\frac{1}{\alpha})$ and $q_1\in(q_0,\infty)$ such that $\frac{1}{q_1}=\frac{1}{p_1}-\alpha$ and $\omega \in \mathcal{A}(p,q)$, where $\frac{1}{p}=t+\frac{1-t}{p_1}$, $\frac{1}{q}=\frac{t}{q_0}+\frac{1-t}{q_1}$, $t\in[0,1]$.
\item[$\rm(v)$]
Let $p_0=\frac{1}{\alpha}$. If $\omega\in \mathcal{A}(p_0,\infty)$, then there exist real numbers $p_2\in(1,p_0)$ and $q_2\in(1,\infty)$ such that $\frac{1}{q_2}=\frac{1}{p_2}-\alpha$ and $\omega \in \mathcal{A}(p,q)$, where $\frac{1}{p}=\frac{t}{p_0}+\frac{1-t}{p_2}$, $\frac{1}{q}=\frac{1-t}{q_2}$, $t\in[0,1]$.
\end{itemize}
\end{proposition}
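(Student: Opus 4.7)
Parts (i) and (ii) are quoted verbatim from the literature, so the substantive work is in (iii)--(v), and in each case the strategy is to combine the equivalences of (i) with the self-improvement coming from the discrete reverse Hölder inequality (ii). For (iii), applying (i) gives $\omega^{q}\in\mathcal{A}_{1+q/p'}$ and $\omega^{-p'}\in\mathcal{A}_{1+p'/q}$, so by (ii) each of these weights satisfies a reverse Hölder inequality with some exponent $r>1$. To produce $(p_{2},q_{2})$ with $p<p_{2}<1/\alpha$, I would set $q_{2}:=qr$ for $r>1$ sufficiently close to $1$ and let $p_{2}$ be defined by $1/q_{2}=1/p_{2}-\alpha$; then the reverse Hölder inequality on $\omega^{q}$ controls $\bigl(\frac{1}{|S_{m,N}|}\sum\omega^{q_{2}}\bigr)^{1/q_{2}}$ and ordinary Hölder (since $p_{2}'<p'$) controls $\bigl(\frac{1}{|S_{m,N}|}\sum\omega^{-p_{2}'}\bigr)^{1/p_{2}'}$ by $\bigl(\frac{1}{|S_{m,N}|}\sum\omega^{-p'}\bigr)^{1/p'}$. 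A symmetric argument applied to $\omega^{-p'}$ produces $(p_{1},q_{1})$ with $1<p_{1}<p$.

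For (iv), (i) gives $\omega^{q_{0}}\in\mathcal{A}_{1}$, so the $\mathcal{A}_{1}$ condition combined with (ii) yields $r>1$ with
\begin{equation*}
\bigg(\frac{1}{|S_{m,N}|}\sum_{k\in S_{m,N}}\omega(k)^{q_{0}r}\bigg)^{1/r}\leq C\,\frac{1}{|S_{m,N}|}\sum_{k\in S_{m,N}}\omega(k)^{q_{0}}\leq C\Bigl(\inf_{k\in S_{m,N}}\omega(k)\Bigr)^{q_{0}},
\end{equation*}
hence $\omega^{q_{0}r}\in\mathcal{A}_{1}$, equivalently $\omega\in\mathcal{A}(1,q_{1})$ with $q_{1}:=q_{0}r>q_{0}$. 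Defining $p_{1}$ by $1/p_{1}:=1/q_{1}+\alpha$ places $p_{1}$ in $(1,1/\alpha)$, and the trivial inclusion $\mathcal{A}(1,q_{1})\subset\mathcal{A}(p_{1},q_{1})$ (since $(\frac{1}{|S|}\sum\omega^{-p_{1}'})^{1/p_{1}'}\leq\sup 1/\omega$) supplies the second endpoint. For intermediate $t\in[0,1]$ I would interpolate both factors of the $\mathcal{A}(p,q)$ constant by Hölder for the counting measure: $\bigl(\frac{1}{|S|}\sum\omega^{q}\bigr)^{1/q}\leq\bigl(\frac{1}{|S|}\sum\omega^{q_{0}}\bigr)^{t/q_{0}}\bigl(\frac{1}{|S|}\sum\omega^{q_{1}}\bigr)^{(1-t)/q_{1}}$ since $1/q=t/q_{0}+(1-t)/q_{1}$, and similarly $\bigl(\frac{1}{|S|}\sum\omega^{-p'}\bigr)^{1/p'}\leq(\sup 1/\omega)^{t}\bigl(\frac{1}{|S|}\sum\omega^{-p_{1}'}\bigr)^{(1-t)/p_{1}'}$ since $1/p'=(1-t)/p_{1}'$; multiplying bounds the $\mathcal{A}(p,q)$-constant by $\|\omega\|_{\mathcal{A}(1,q_{0})}^{t}\|\omega\|_{\mathcal{A}(p_{1},q_{1})}^{1-t}$.

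The proof of (v) is dual. By (i) $\omega^{-p_{0}'}\in\mathcal{A}_{1}$ with $p_{0}'=1/(1-\alpha)$, so (ii) gives $r>1$ with $\omega^{-p_{0}'r}\in\mathcal{A}_{1}$; setting $p_{2}':=p_{0}'r$ (equivalently $p_{2}=r/(r-1+\alpha)$) keeps $1<p_{2}<p_{0}$, so $\omega\in\mathcal{A}(p_{2},\infty)\subset\mathcal{A}(p_{2},q_{2})$ with $q_{2}$ defined by $1/q_{2}=1/p_{2}-\alpha$ (using $(\frac{1}{|S|}\sum\omega^{q_{2}})^{1/q_{2}}\leq\sup\omega$). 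For the whole segment $1/p=t/p_{0}+(1-t)/p_{2}$, $1/q=(1-t)/q_{2}$, the identity $1/p'=t/p_{0}'+(1-t)/p_{2}'$ lets me interpolate the $\omega^{-p'}$ factor by log-convexity between $(\frac{1}{|S|}\sum\omega^{-p_{0}'})^{1/p_{0}'}$ and $(\frac{1}{|S|}\sum\omega^{-p_{2}'})^{1/p_{2}'}$, and the $\omega^{q}$ factor by $L^{\infty}$--$L^{q_{2}}$ interpolation, yielding $\|\omega\|_{\mathcal{A}(p,q)}\leq\|\omega\|_{\mathcal{A}(p_{0},\infty)}^{t}\|\omega\|_{\mathcal{A}(p_{2},q_{2})}^{1-t}$.

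The main obstacle I anticipate is the parameter bookkeeping rather than any analytic subtlety: one must check that in each case the reverse Hölder exponent $r>1$ can be chosen small enough that $p_{1}$, $p_{2}$ land in $(1,1/\alpha)$ and $q_{1}$, $q_{2}$ in the prescribed open intervals, and that along the interpolation segment the parametric identities really coincide with the duality relation $1/q=1/p-\alpha$. Once those compatibilities are in place, the interpolation inequalities themselves reduce to straightforward applications of Hölder's inequality for the normalized counting measure on $S_{m,N}$.
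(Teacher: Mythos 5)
Your proposal is correct and follows essentially the same route as the paper: convert to $\mathcal{A}_p$ membership via (i), self-improve with the discrete reverse H\"older inequality (ii), and control the remaining factor by ordinary H\"older, with the endpoint exponents chosen exactly as in the paper ($q_2/q=r$, $p_1'/p'=r$, $q_1/q_0=r_0$, $p_2'/p_0'=r$). The only cosmetic difference is in the intermediate-$t$ step of (iv) and (v), where you bound $\|\omega\|_{\mathcal{A}(p,q)}$ by the log-convexity inequality $\|\omega\|_{\mathcal{A}(1,q_0)}^{t}\|\omega\|_{\mathcal{A}(p_1,q_1)}^{1-t}$ instead of, as the paper does, deducing $\omega^{q_0}\in RH_{q/q_0}$ and rerunning the endpoint estimate; both are one-line H\"older arguments and yield the same conclusion.
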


\begin{proof}
{\rm(i)}
By the definitions of $\mathcal{A}_p$ and $\mathcal{A}(p,q)$, and discrete H\"older's inequality, we immediately obtain (i). The details are omitted and its corresponding results on real line can be found in \cite[Page 139]{lu}.

{\rm(iii)}
The continuous version of (iii) is also mentioned in the proof of \cite[Lemma 1]{welland} but without complete proof. For the sake of completeness, the proof is given as follows. Notice that $\omega \in \mathcal{A}(p,q)$, then by (i) and (ii), we obtain that $\omega ^{-p\prime}\in \mathcal{A}_{1+\frac{p\prime}{q}}$ and there exists $r_1>1$ such that $\omega ^{-p\prime}\in RH_{r_1}$. Pick $p_1\in(1,p)$ and $q_1\in(1,q)$ such that $\frac{{p_1}^{\prime}}{p^{\prime}}=r_1$ and $\frac{1}{q_1}=\frac{1}{p_1}-\alpha$. Then from this, discrete H\"older's inequality and (ii), it follows that
\begin{align*}
&\left( \frac{1}{\left| S_{m,N} \right|}\sum_{k\in S_{m,N}}{\omega \left( k \right) ^{q_1}} \right) ^{\small{\frac{1}{q_1}}}\left( \frac{1}{\left| S_{m,N} \right|}\sum_{k\in S_{m,N}}{\omega \left( k \right) ^{-{p_1}^{\prime}}} \right) ^{\small{\frac{1}{{p_1}^{\prime}}}}
\\
\leq& \left( \frac{1}{\left| S_{m,N} \right|}\sum_{k\in S_{m,N}}{\omega \left( k \right) ^q} \right) ^{\small{\frac{1}{q}}}\left( \frac{1}{\left| S_{m,N} \right|}\sum_{k\in S_{m,N}}{\omega \left( k \right) ^{-p^{\prime}r_1}} \right) ^{\frac{1}{p^{\prime}r_1}}
\\
\leq& C\left( \frac{1}{\left| S_{m,N} \right|}\sum_{k\in S_{m,N}}{\omega \left( k \right) ^q} \right) ^{\small{\frac{1}{q}}}\left( \frac{1}{\left| S_{m,N} \right|}\sum_{k\in S_{m,N}}{\omega \left( k \right) ^{-p^{\prime}}} \right) ^{\small{\frac{1}{p^{\prime}}}}\leq C\left\| \omega \right\| _{\mathcal{A} \left( p,q \right) \left( \mathbb{Z} \right)},
\end{align*}
which implies that $\omega \in \mathcal{A}(p_1,q_1)$.

Similarly, since $\omega \in \mathcal{A}(p,q)$, then by (i) and (ii), we obtain that $\omega ^q\in \mathcal{A} _{1+\frac{q}{p\prime}}$ and there exists $r_2>1$ such that $\omega ^q\in RH_{r_2}$. Pick $p_2\in(p,\frac{1}{\alpha})$ and $q_2\in(q,\infty)$ such that $\frac{q_2}{q}=r_2$ and $\frac{1}{q_2}=\frac{1}{p_2}-\alpha$. Then from this, discrete H\"older's inequality and (ii), it follows that
\begin{align*}
&\left( \frac{1}{\left| S_{m,N} \right|}\sum_{k\in S_{m,N}}{\omega \left( k \right) ^{q_2}} \right) ^{\small{\frac{1}{q_2}}}\left( \frac{1}{\left| S_{m,N} \right|}\sum_{k\in S_{m,N}}{\omega \left( k \right) ^{-{p_2}^{\prime}}} \right) ^{\small{\frac{1}{{p_2}^{\prime}}}}
\\
\leq& \left( \frac{1}{\left| S_{m,N} \right|}\sum_{k\in S_{m,N}}{\omega \left( k \right) ^{qr_2}} \right) ^{\frac{1}{qr_2}}\left( \frac{1}{\left| S_{m,N} \right|}\sum_{k\in S_{m,N}}{\omega \left( k \right) ^{-p^{\prime}}} \right) ^{\small{\frac{1}{p^{\prime}}}}
\\
\leq& C\left( \frac{1}{\left| S_{m,N} \right|}\sum_{k\in S_{m,N}}{\omega \left( k \right) ^q} \right) ^{\small{\frac{1}{q}}}\left( \frac{1}{\left| S_{m,N} \right|}\sum_{k\in S_{m,N}}{\omega \left( k \right) ^{-p^{\prime}}} \right) ^{\small{\frac{1}{p^{\prime}}}}\leq C\left\| \omega \right\| _{\mathcal{A} \left( p,q \right) \left( \mathbb{Z} \right)},
\end{align*}
which implies that $\omega \in \mathcal{A}(p_2,q_2)$.

{\rm(iv)}
If $\omega \in \mathcal{A}(1,q_0)$, then by (i) and (ii), we obtain that $\omega ^{q_0}\in \mathcal{A} _1$ and there exists $r_0>1$ such that $\omega ^{q_0}\in RH_{r_0}$. Pick $p_1\in(1,\frac{1}{\alpha})$ and $q_1\in(q_0,\infty)$ such that $\frac{q_1}{q_0}=r_0$ and $\frac{1}{q_1}=\frac{1}{p_1}-\alpha$. Then from this and (ii), it follows that
\begin{align}\label{eq2.1}
&\left( \frac{1}{\left| S_{m,N} \right|}\sum_{k\in S_{m,N}}{\omega \left( k \right) ^{q_1}} \right) ^{\small{\frac{1}{q_1}}}\left( \frac{1}{\left| S_{m,N} \right|}\sum_{k\in S_{m,N}}{\omega \left( k \right) ^{-{p_1}^{\prime}}} \right) ^{\small{\frac{1}{{p_1}^{\prime}}}}\nonumber
\\
\leq& \left( \frac{1}{\left| S_{m,N} \right|}\sum_{k\in S_{m,N}}{\omega \left( k \right) ^{q_0r_0}} \right) ^{\small{\frac{1}{q_0r_0}}}\left( \underset{k\in S_{m,N}}{\mathrm{sup}}\,\,\frac{1}{\omega \left( k \right)} \right)\nonumber
\\
\leq& C\left( \frac{1}{\left| S_{m,N} \right|}\sum_{k\in S_{m,N}}{\omega \left( k \right) ^{q_0}} \right) ^{\small{\frac{1}{q_0}}}\left( \underset{k\in S_{m,N}}{\mathrm{sup}}\,\,\frac{1}{\omega \left( k \right)} \right) \leq C\left\| \omega \right\| _{\mathcal{A} \left( 1,q_0 \right) \left( \mathbb{Z} \right)},
\end{align}
which implies that $\omega \in \mathcal{A}(p_1,q_1)$. Moreover, for any $t\in [0,1]$, pick $p\in [1,p_1]$ and $q\in [q_0,q_1]$ such that $\frac{1}{p}=t+\frac{1-t}{p_1}$ and $\frac{1}{q}=\frac{t}{q_0}+\frac{1-t}{q_1}$. Then for $r:=\frac{q}{q_0}\in (1,r_0]$, by discrete H\"older's inequality and $\omega ^{q_0}\in RH_{r_0}$, we have $\omega ^{q_0}\in RH_r$. From this and repeating the estimate of (\ref{eq2.1}), it follows that $\omega \in \mathcal{A}(p,q)$.

{\rm(v)}
The proof is almost the same as that of (iv), so the details are omitted.
\end{proof}

\begin{definition}\label{d2.4}\cite[Page 3]{fu}
Let $1\leq p<\infty$ and $\omega:\mathbb{Z}\rightarrow(0,\infty)$ be a discrete weight. The discrete weighted weak Lebesgue space $l^{p,weak}_\omega:=l^{p,weak}_\omega(\mathbb{Z})$ is the set of sequences $x=\{x(k)\}_{k\in\mathbb{Z}}\subset\mathbb{R}$ such that
\begin{equation*}
\|x\|_{l^{p,weak}_\omega}:=\sup_{\lambda>0}\,\lambda\left[\omega(\{k\in\mathbb{Z}:|x(k)|>\lambda\})\right]^\frac{1}{p}<\infty.
\end{equation*}
\end{definition}

\section{The endpoint estimates of discrete Riesz potential on discrete weighted Lebesgue spaces \label{s3}}
In this section, we discuss the sufficient and necessary results of discrete fractional maximal operator and discrete Riesz potential on discrete weighted Lebesgue spaces at the cases $\omega\in \mathcal{A}(1,q)$ and $\omega\in \mathcal{A}(p,\infty)$.

\begin{definition}\label{d3.1}\cite[Definition 2.1]{anupindi}
Let $0\leq \alpha <1$ and $x=\{x(k)\} _{k\in \mathbb{Z}}\subset \mathbb{R}$ be a sequence. The {\it noncentral discrete fractional maximal operator} $\overline{M}_{\alpha}$ is defined by
\begin{equation*}
\overline{M}_{\alpha}x\left( m \right) :=\underset{J\ni m}{\mathrm{sup}}\ \frac{1}{\left| J \right|^{1-\alpha}}\sum_{k\in J}{\left| x\left( k \right) \right|},\enspace m\in \mathbb{Z},
\end{equation*}
where $J$ is any interval of finite consecutive integers.
\end{definition}

The following Theorem \ref{t3.2} is a sufficient and necessary result for weight class of $\mathcal{A}(1,q)$.

\begin{theorem}\label{t3.2}
Let $0<\alpha <1$ and $\frac{1}{q}=1-\alpha$. Then $\omega\in \mathcal{A}(1,q)$ if and only if $M_{\alpha}$ is bounded from $l_{\omega}^{1}$ to $l_{\omega^q}^{q,weak}$, i.e., there exists a positive constant $C$ such that, for every $\lambda >0$,
\begin{equation}\label{eq3.1}
\left( \sum_{\{k\in \mathbb{Z} :M_{\alpha}x(k)>\lambda \}}{\omega}(k)^q \right) ^{\frac{1}{q}}\leq \frac{C}{\lambda}\sum_{k\in \mathbb{Z}}{\left| x(k) \right|}\omega (k).
\end{equation}
\end{theorem}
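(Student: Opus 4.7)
The plan is to handle the two directions separately. For the necessity, I would test the weak-type bound \eqref{eq3.1} against point masses. Fix $m\in\mathbb{Z}$, $N\in\mathbb{N}$ and any $k_0\in S_{m,N}$, and set $x=\chi_{\{k_0\}}$. For every $k\in S_{m,N}$ the symmetric interval $S_{k,2N}$ contains $k_0$ (since $|k-k_0|\le 2N$) and satisfies $|S_{k,2N}|=4N+1\le 3|S_{m,N}|$, so $M_\alpha x(k)\ge|S_{k,2N}|^{\alpha-1}\ge C|S_{m,N}|^{\alpha-1}$. Choosing $\lambda$ strictly below this common lower bound forces $S_{m,N}\subset\{k:M_\alpha x(k)>\lambda\}$, and \eqref{eq3.1} then gives
\[
\omega^q(S_{m,N})^{1/q}\le\frac{C}{\lambda}\omega(k_0)\le C|S_{m,N}|^{1-\alpha}\omega(k_0).
\]
Since $\tfrac1q=1-\alpha$, dividing by $|S_{m,N}|^{1/q}$ and taking the infimum over $k_0\in S_{m,N}$ produces exactly the $\mathcal{A}(1,q)$ condition.

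For the sufficiency, assume $\omega\in\mathcal{A}(1,q)$, so by Proposition~\ref{p2.3}(i), $\omega^q\in\mathcal{A}_1$. Let $E_\lambda:=\{k\in\mathbb{Z}:M_\alpha x(k)>\lambda\}$ and, for each $k\in E_\lambda$, choose a symmetric interval $S^k=S_{k,N_k}$ satisfying
\[
|S^k|^{1-\alpha}<\frac{1}{\lambda}\sum_{j\in S^k}|x(j)|.
\]
I would apply a discrete Vitali-type covering lemma to the family $\{S^k\}_{k\in E_\lambda}$ to extract a countable disjoint subfamily $\{S^{k_i}\}$ whose dilates $3S^{k_i}$ still cover $E_\lambda$. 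The $\mathcal{A}_1$ condition for $\omega^q$, combined with $\tfrac1q=1-\alpha$, gives, for every symmetric interval $S$,
\[
\omega^q(S)^{1/q}\le C|S|^{1/q}\inf_{k\in S}\omega(k)=C|S|^{1-\alpha}\inf_{k\in S}\omega(k).
\]
Applying this to $S=S^{k_i}$ and using the pointwise bound $\inf_{S^{k_i}}\omega\cdot|x(j)|\le\omega(j)|x(j)|$ for $j\in S^{k_i}$ yields $\omega^q(S^{k_i})^{1/q}\le C\lambda^{-1}\sum_{j\in S^{k_i}}|x(j)|\omega(j)$. Doubling of $\omega^q$ (again from $\mathcal{A}_1$) takes care of passing from $3S^{k_i}$ to $S^{k_i}$, and the elementary subadditivity estimate $(\sum_i b_i)^{1/q}\le\sum_i b_i^{1/q}$, valid for $b_i\ge0$ and $q\ge1$, together with disjointness of $\{S^{k_i}\}$, leads to
\[
\omega^q(E_\lambda)^{1/q}\le C\sum_i\omega^q(S^{k_i})^{1/q}\le\frac{C}{\lambda}\sum_{j\in\mathbb{Z}}|x(j)|\omega(j),
\]
which is precisely \eqref{eq3.1}.

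The main technical point I expect to wrestle with is the discrete Vitali step when the radii $N_k$ are unbounded: a clean formulation on $\mathbb{Z}$ exploits the linear order and a greedy selection of maximal intervals, possibly after an initial truncation of the family to $\{S^k:|S^k|\le L\}$ and a limit in $L$. The subsequent $\ell^q\hookrightarrow\ell^1$ step is elementary but essential and only goes through because $q\ge1$; all remaining ingredients (doubling of $\mathcal{A}_1$ weights, size comparability of dilates of $S_{m,N}$, and reverse H\"older) are either recorded in Proposition~\ref{p2.3} or are straightforward discrete analogues of well-known Euclidean facts.
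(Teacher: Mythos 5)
Your proposal is correct, and both directions go through, but each half takes a route different from the paper's. For the direction ``weak bound $\Rightarrow \omega\in\mathcal{A}(1,q)$'' you test \eqref{eq3.1} against the point mass $x=\mathcal{X}_{\{k_0\}}$ and take the infimum over $k_0\in S_{m,N}$; the paper instead takes $x=\mathcal{X}_E$ where $E\subset S_{m,N}$ is the set on which $\omega<\inf_{S_{m,N}}\omega+\epsilon$, and lets $\epsilon\to0$. The two tests are equally valid (your bound $M_\alpha x(k)\ge 3^{\alpha-1}|S_{m,N}|^{\alpha-1}$ for $k\in S_{m,N}$ is checked correctly), and the point-mass version is arguably the more economical of the two. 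For the direction ``$\omega\in\mathcal{A}(1,q)\Rightarrow$ weak bound'' the paper covers the truncated level set $E_\lambda\cap S_{0,M}$ by a Besicovitch-type family with bounded overlap (Lemma \ref{l3.4}), applies the $\mathcal{A}(1,q)$ condition directly on each selected interval, and uses the bounded overlap to resum; no doubling is invoked. You instead extract a disjoint Vitali subfamily, pass from $3S^{k_i}$ back to $S^{k_i}$ via the doubling of $\omega^q\in\mathcal{A}_1$, and use disjointness to resum. Both are legitimate on $\mathbb{Z}$; the Besicovitch route avoids doubling altogether, while your route needs it but gets it for free from $\mathcal{A}_1$. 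The one technical point you flag --- unbounded radii in the Vitali selection --- is real but handled exactly as you suggest (truncate either the level set, as the paper does, or the family of radii, and pass to the limit by monotone convergence); with that in place the argument is complete.
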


To prove Theorem \ref{t3.2}, we need the following two lemmas.

\begin{lemma}\label{l3.3}\cite[Remark 3.2]{hao2}
Let $x=\{x(m)\}_{m\in \mathbb{Z}}\subset \mathbb{R}$ be a sequence and $0\leq \alpha <1$. Then $M_{\alpha}x(m) \sim \overline{M}_{\alpha}x(m)$.
\end{lemma}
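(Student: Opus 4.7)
The plan is to prove the equivalence $M_\alpha x(m)\sim \overline{M}_\alpha x(m)$ by establishing the two one-sided comparisons separately. The easy direction is $M_\alpha x(m)\le \overline{M}_\alpha x(m)$: for every $N\in\mathbb N$, the symmetric set $S_{m,N}=\{k\in\mathbb Z:|k-m|\le N\}$ is itself an interval of consecutive integers that contains $m$, so it belongs to the larger family over which $\overline{M}_\alpha$ takes its supremum. Therefore the supremum defining $M_\alpha x(m)$ is bounded by the supremum defining $\overline{M}_\alpha x(m)$.

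For the reverse inequality $\overline{M}_\alpha x(m)\le C\, M_\alpha x(m)$, I would fix an arbitrary interval $J=\{a,a+1,\dots,b\}$ with $a\le m\le b$, and compare it with a symmetric interval centered at $m$. Set $r:=\max\{m-a,\,b-m\}\in\mathbb N$. Then every $k\in J$ satisfies $|k-m|\le r$, so $J\subseteq S_{m,r}$; moreover $r\le b-a=|J|-1$, which yields
\begin{equation*}
|S_{m,r}|=2r+1\le 2|J|-1\le 2|J|.
\end{equation*}
Using $|x|\ge 0$ to enlarge the domain of summation, and then the previous size estimate,
\begin{equation*}
\frac{1}{|J|^{1-\alpha}}\sum_{k\in J}|x(k)|
\le \frac{2^{1-\alpha}}{|S_{m,r}|^{1-\alpha}}\sum_{k\in S_{m,r}}|x(k)|
\le 2^{1-\alpha}\,M_\alpha x(m).
\end{equation*}
Taking the supremum over all intervals $J\ni m$ gives $\overline{M}_\alpha x(m)\le 2^{1-\alpha}M_\alpha x(m)$, which combined with the first inequality proves $M_\alpha x(m)\sim \overline{M}_\alpha x(m)$.

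There is no substantive obstacle here: everything is elementary combinatorics on $\mathbb Z$. The only small point requiring care is that $r$ defined above is an integer (since $a,b,m\in\mathbb Z$) and nonnegative, so $S_{m,r}$ is a legitimate symmetric set over which $M_\alpha$ takes its supremum; the degenerate case $J=\{m\}$ corresponds to $r=0$, $S_{m,0}=\{m\}$, and the comparison is trivial. The constant $2^{1-\alpha}\le 2$ is uniform in $m$, $x$ and $\alpha\in[0,1)$, so the equivalence $M_\alpha x(m)\sim\overline{M}_\alpha x(m)$ holds with absolute implicit constants, as needed for the subsequent applications in Theorem~\ref{t3.2}.
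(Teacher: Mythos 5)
Your proof is correct, and since the paper simply cites this equivalence from \cite[Remark 3.2]{hao2} rather than proving it, your argument supplies the standard comparison that the cited remark relies on: the trivial containment for one direction, and for the other the doubling estimate $|S_{m,r}|\le 2|J|$ with $r=\max\{m-a,b-m\}$, giving the uniform constant $2^{1-\alpha}\le 2$. All steps check out, including the degenerate case $J=\{m\}$ and the use of $1-\alpha>0$ when passing from $|J|\ge |S_{m,r}|/2$ to $|J|^{-(1-\alpha)}\le 2^{1-\alpha}|S_{m,r}|^{-(1-\alpha)}$.
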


\begin{lemma}\label{l3.4}\cite[Theorem 1.1]{de}
Let $A$ be a bounded set in $\mathbb{R} ^n$. For each $x\in A$, a closed cubic interval $Q(x)$ centered at $x$ is given. Then one can choose, from among the given intervals $\{Q(x)\}_{x\in A}$, a sequence $\{Q_k\}_{k\in \mathbb{N}}$ (possible finite) such that:
\begin{itemize}
\item[\rm(i)] The set $A$ is covered by the sequence, i.e., $A\subset \bigcup _kQ_k$;
\item[\rm(ii)] No point in $\mathbb{R} ^n$ is in more than $C_n$ (a number that only depends on $n$) cubes of the sequence $\{Q_k\}_{k\in \mathbb{N}}$, i.e., for every $x\in \mathbb{R} ^n$, $\sum_k{\mathcal{X} _{Q_k}(x) \leq C_n}$.
\end{itemize}
\end{lemma}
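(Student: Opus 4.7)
The plan is to split the proof into necessity and sufficiency, working throughout with the noncentred variant $\overline{M}_\alpha$ of Definition \ref{d3.1}, which is pointwise comparable to $M_\alpha$ by Lemma \ref{l3.3}. The necessity direction is the easier one, and I would dispatch it first by a test-sequence argument. Given a symmetric interval $S_{m,N}$, pick $j_0\in S_{m,N}$ achieving $\inf_{k\in S_{m,N}}\omega(k)$ and plug the delta sequence $x=\mathcal{X}_{\{j_0\}}$ into (\ref{eq3.1}). For every $k\in S_{m,N}$ the symmetric interval $S_{k,2N}$ contains $j_0$, so $M_\alpha x(k)\geq(4N+1)^{\alpha-1}$; taking $\lambda$ just below this threshold and using $1/q=1-\alpha$, the inequality (\ref{eq3.1}) yields
\begin{equation*}
\Bigl(\sum_{k\in S_{m,N}}\omega(k)^q\Bigr)^{1/q}\leq C(4N+1)^{1/q}\omega(j_0)\leq C|S_{m,N}|^{1/q}\inf_{k\in S_{m,N}}\omega(k),
\end{equation*}
which, after dividing by $|S_{m,N}|^{1/q}$ and taking suprema over $m,N$, is precisely the condition $\omega\in\mathcal{A}(1,q)(\mathbb{Z})$ in Definition \ref{d2.2}.

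For the sufficiency direction, fix $\lambda>0$ and $x\in l_\omega^1(\mathbb{Z})$, and set $E_\lambda:=\{k\in\mathbb{Z}:\overline{M}_\alpha x(k)>\lambda\}$. For each $k\in E_\lambda$ select a finite interval $J_k\ni k$ witnessing the supremum, so $|J_k|^{1-\alpha}\lambda<\sum_{j\in J_k}|x(j)|$. Since Lemma \ref{l3.4} is formulated for intervals centred at the selected point, I replace $J_k$ by the smallest symmetric interval $\widetilde{J}_k$ centred at $k$ and containing $J_k$; then $|\widetilde{J}_k|\leq 2|J_k|$ and the mass inequality persists up to an extra factor $2^{1-\alpha}$. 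To meet the boundedness hypothesis of Lemma \ref{l3.4}, I first work on the truncation $E_\lambda\cap[-M,M]$ to extract a bounded-overlap subcover $\{\widetilde{J}_{k_i}\}$ whose overlap constant $C_1$ is absolute, and at the very end pass $M\to\infty$ by monotone convergence.

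The core computation then combines three ingredients. First, the $\mathcal{A}(1,q)$ hypothesis yields $\sum_{j\in\widetilde{J}_{k_i}}\omega(j)^q\leq C|\widetilde{J}_{k_i}|\bigl(\inf_{j\in\widetilde{J}_{k_i}}\omega(j)\bigr)^q$. Second, raising the selection inequality for $\widetilde{J}_{k_i}$ to the $q$-th power with $q(1-\alpha)=1$ and pushing $\inf\omega$ inside the sum gives
\begin{equation*}
\sum_{j\in\widetilde{J}_{k_i}}\omega(j)^q\leq\frac{C}{\lambda^q}\Bigl(\sum_{j\in\widetilde{J}_{k_i}}|x(j)|\omega(j)\Bigr)^q.
\end{equation*}
Third, summing over $i$, applying the elementary bound $\sum_i a_i^q\leq(\sum_i a_i)^q$ for $a_i\geq 0$ and $q>1$, and using the bounded-overlap estimate $\sum_i\mathcal{X}_{\widetilde{J}_{k_i}}(k)\leq C_1$, produces
\begin{equation*}
\sum_{k\in E_\lambda}\omega(k)^q\leq\frac{CC_1^q}{\lambda^q}\Bigl(\sum_{k\in\mathbb{Z}}|x(k)|\omega(k)\Bigr)^q,
\end{equation*}
which after taking $q$-th roots is (\ref{eq3.1}). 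The main technical obstacle I anticipate is the bookkeeping in passing from the noncentred $J_k$ to the centred $\widetilde{J}_k$ and the truncation trick needed to invoke Lemma \ref{l3.4} on the unbounded lattice $\mathbb{Z}$; once these points are handled, the argument reduces to a lattice analogue of the classical Muckenhoupt--Wheeden proof.
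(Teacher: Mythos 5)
Your proposal does not address the statement at all. The statement to be proved is Lemma \ref{l3.4}, which is the Besicovitch-type covering theorem in $\mathbb{R}^n$: given a bounded set $A$ and, for each $x\in A$, a closed cube $Q(x)$ centered at $x$, one can extract a subfamily $\{Q_k\}$ that still covers $A$ and has overlap bounded by a dimensional constant $C_n$. This is a purely geometric selection statement about cubes; it involves no weights, no maximal operators, and no class $\mathcal{A}(1,q)$. The paper does not prove it either --- it imports it verbatim from De Guzm\'an's book (Theorem 1.1 there), so the ``paper's own proof'' is simply a citation. A genuine proof would require a selection algorithm (e.g.\ repeatedly choosing cubes of nearly maximal side length among those whose centers are not yet covered) together with a geometric packing argument showing that the centers of overlapping selected cubes cannot cluster, which is where the dimensional constant $C_n$ comes from. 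None of that appears in your writeup.

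What you have actually written is a proof sketch of Theorem \ref{t3.2} (the equivalence of $\omega\in\mathcal{A}(1,q)$ with the weak $(1,q)$ boundedness of $M_\alpha$), and indeed your argument explicitly \emph{invokes} Lemma \ref{l3.4} as a tool in the sufficiency direction --- so the proposal is circular as a proof of the lemma itself, quite apart from proving the wrong statement. As a sketch of Theorem \ref{t3.2} it is broadly in the spirit of the paper's argument (test sequences for necessity; a covering-plus-bounded-overlap decomposition of the level set for the weak bound), with the directions of ``necessity'' and ``sufficiency'' labelled oppositely to the paper's convention, but that is beside the point: the task was to prove the covering lemma, and no step of your argument engages with it.
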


\begin{proof}[Proof of Theorem \ref{t3.2}]
Proof of necessity. By referencing the proof of \cite[Lemma 3.4]{hao2} in the case of $1<p<\frac{1}{\alpha}$, let us now verify that \cite[Lemma 3.4]{hao2} also holds true in the case of $p=1$. For every $\lambda>0$, $M\in\mathbb{Z_+}$, $N\in\mathbb{N}$ and $k\in\mathbb{Z}$, we define
\begin{equation*}
\begin{aligned}
& S_{k,N}=\mathbb{Z}\cap Q_{k,N},\enspace\mathrm{where}\enspace Q_{k,N}:=\{y\in \mathbb{R} :\left| y-k \right|\leq N\};\\
& E_{\lambda}:=\{k\in \mathbb{Z} :M_{\alpha}x(k)>\lambda \};\\
& E_{\lambda ,M}:=E_{\lambda}\cap S_{0,M},\enspace\mathrm{where}\enspace S_{0,M}:=\{m\in\mathbb{Z}:\left|m-0\right|\leq M\}.
\end{aligned}
\end{equation*}
Thus, for every $k\in E_{\lambda,M}$, by the definition of $M_\alpha$, there exists a $S_{k,N_k}$ such that
\begin{equation}\label{eq3.2}
\left|S_{k,N_k}\right|^{-1+\alpha}\sum_{m\in S_{k,N_k}}\left|x(m)\right|>\lambda.
\end{equation}
Since $E_{\lambda,M}\subset\mathop{\bigcup}_{k\in E_{\lambda,M}}S_{k,N_k}\subset\mathop{\bigcup}_{k\in E_{\lambda,M}}Q_{k,{N_k}}$, by Lemma \ref{l3.4}, there exists $\{k_j\}\subset E_{\lambda,M}$ such that $E_{\lambda,M}\subset \mathop{\bigcup}_j Q_{k_j,N_j}$ and $\sum_j\mathcal{X}_{Q_{k_j,N_j}}(k)\leq C_1$. Then we further have
\begin{equation}\label{eq3.3}
E_{\lambda,M}\subset\bigg(\mathop{\bigcup}\limits_j Q_{k_j,N_j}\bigg)\bigcap\mathbb{Z}=\mathop{\bigcup}\limits_j S_{k_j,N_j},
\end{equation}
\begin{equation}\label{eq3.4}
\sum\limits_j\mathcal{X}_{S_{k_j,N_j}}(k)\leq\sum\limits_j\mathcal{X}_{Q_{k_j,N_j}}(k)\leq C_1.
\end{equation}
From (\ref{eq3.2}), (\ref{eq3.3}), (\ref{eq3.4}), $\frac{1}{q}<1$, $\omega\in \mathcal{A}(1,q)$ and discrete H\"older's inequality, we conclude that
\begin{align*}
&\left(\sum_{m\in E_{\lambda,M}}\omega(m)^q\right)^\frac{1}{q}\leq \left(\sum\limits_j\sum\limits_{m\in S_{k_j,N_j}}\omega(m)^q\right)^\frac{1}{q}\leq\sum\limits_j\left(\sum\limits_{m\in S_{k_j,N_j}}\omega(m)^q\right)^\frac{1}{q}
\\
\leq& \sum_j\left(\sum_{m\in S_{k_j,N_j}}\omega(m)^q\right)^\frac{1}{q}\left(\frac{1}{\lambda\left|S_{k_j,N_j}\right|^{1-\alpha}}
\sum_{m\in S_{k_j,N_j}}\left|x(m)\right|\right)
\\
\leq& \sum_j{\left( \sum_{m\in S_{k_j,N_j}}{\omega \left( m \right) ^q} \right) ^{\small{\frac{1}{q}}}}\lambda ^{-1}\left| S_{k_j,N_j} \right|^{-\small{\frac{1}{q}}}\left( \sum_{m\in S_{k_j,N_j}}{\left| x\left( m \right) \right|\omega \left( m \right)} \right) \left( \underset{m\in S_{k_j,N_j}}{\mathrm{sup}}\frac{1}{\omega \left( m \right)} \right)
\\
\leq& \lambda ^{-1}\left\| \omega \right\| _{\mathcal{A} \left( 1,q \right) \left( \mathbb{Z} \right)}\sum_j{\sum_{m\in S_{k_j,N_j}}{\left| x\left( m \right) \right|\omega \left( m \right)}}\leq C_1 \lambda ^{-1}\left\| \omega \right\| _{\mathcal{A} \left( 1,q \right) \left( \mathbb{Z} \right)}\sum_{m\in \mathbb{Z}}{\left| x\left( m \right) \omega \left( m \right) \right|},
\end{align*}
and letting $M\rightarrow +\infty$ on both sides of above inequality, we obtain (\ref{eq3.1}).

Proof of sufficiency. If $M_{\alpha}$ is bounded from $l_{\omega}^{1}$ to $l_{\omega^q}^{q,weak}$, let us verify $\omega\in \mathcal{A}(1,q)$. Fix a bounded interval $S:=S_{m,N}\subset \mathbb{Z}$. Let $A:=\underset{k\in S}{\mathrm{inf}}\ \omega(k)$. Then $0<A<\infty$. Here, we only consider the case that $S$ is not a single point set. If $S$ is a single point set, the proof is obvious. Given $\epsilon >0$, there is a subset $E\subset S$ such that, for every $k\in E$,
\begin{equation}\label{eq3.5}
\omega\left( k \right) <A+\epsilon.
\end{equation}
Let $x(k) :=\mathcal{X} _E(k)$ and $2\lambda =|S|^{\alpha-1}\sum_{k\in S}{|x(k)|}$. For every $k\in S$, we have $\overline{M}_{\alpha}x(k)\geq 2\lambda >\lambda$ and hence $S\subset \{k\in \mathbb{Z} :\overline{M}_{\alpha}x(k)>\lambda \}$. By (\ref{eq3.1}) with $2\lambda =|S|^{\alpha-1}\sum_{k\in S}{|x(k)|}$, Lemma \ref{l3.3}, (\ref{eq3.5}) and $\frac{1}{q}=1-\alpha$, we obtain
\begin{align*}
\left( \sum_{k\in S}{\omega \left( k \right) ^q} \right) ^{\small{\frac{1}{q}}}&\leq \left( \sum_{\left\{ k\in \mathbb{Z} :\overline{M}_{\alpha}x(k) >\lambda \right\}}{\omega \left( k \right) ^q} \right) ^{\small{\frac{1}{q}}}\leq C\left| S \right|^{1-\alpha}\left| E \right|^{-1}\sum_{k\in E}{\omega \left( k \right)}
\\
&\leq C\left| S \right|^{1-\alpha}\left| E \right|^{-1}\left| E \right|\left( A+\epsilon \right) =C\left| S \right|^{\small{\frac{1}{q}}}\left( A+\epsilon \right).
\end{align*}
Hence by the arbitrariness of $\epsilon$, we obtain
\begin{equation*}
\left( \frac{1}{\left| S \right|}\sum_{k\in S}{\omega\left( k \right) ^q} \right) ^{\small{\frac{1}{q}}}\left( \underset{k\in S}{\mathrm{sup}}\ \frac{1}{\omega\left( k \right)} \right) \leq C,
\end{equation*}
which implies that $\omega\in \mathcal{A}(1,q)$. We finish the proof of Theorem \ref{t3.2}.
\end{proof}

The following Theorem \ref{t3.5} is another sufficient and necessary result for weight class of $\mathcal{A}(1,q)$.

\begin{theorem}\label{t3.5}
Let $0<\alpha <1$ and $\frac{1}{q}=1-\alpha$. Then $\omega\in \mathcal{A}(1,q)$ if and only if $I_{\alpha}$ is bounded from $l_{\omega}^{1}$ to $l_{\omega^q}^{q,weak}$, i.e., there exists a positive constant $C$ such that, for every $a>0$,
\begin{equation}\label{eq3.6}
\left( \sum_{\left\{ k\in \mathbb{Z} :\left| I_{\alpha}x\left( k \right) \right|>a \right\}}{\omega\left( k \right) ^q} \right) ^{\small{\frac{1}{q}}}\leq \frac{C}{a}\sum_{k\in \mathbb{Z}}{\left| x\left( k \right) \right| \omega\left( k \right)}.
\end{equation}
\end{theorem}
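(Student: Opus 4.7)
The plan is to parallel the sufficiency argument of Theorem~\ref{t3.2} using a point-mass test function. Fix a symmetric interval $S:=S_{m,N}\subset\mathbb{Z}$, set $A:=\inf_{k\in S}\omega(k)$, and for any $\varepsilon>0$ pick $k_0\in S$ with $\omega(k_0)<A+\varepsilon$. Testing (\ref{eq3.6}) against $x=\mathcal{X}_{\{k_0\}}$ gives $\|x\|_{l_\omega^1}=\omega(k_0)<A+\varepsilon$, whereas for every $k\in S\setminus\{k_0\}$,
\begin{equation*}
I_\alpha x(k)=\frac{1}{|k-k_0|^{1-\alpha}}\ge\frac{1}{(2N)^{1-\alpha}}\sim|S|^{-1/q}.
\end{equation*}
Choosing $a$ just below this bound places $S\setminus\{k_0\}$ inside $\{k:|I_\alpha x(k)|>a\}$, so (\ref{eq3.6}) yields $\bigl(\sum_{k\in S\setminus\{k_0\}}\omega^q\bigr)^{1/q}\le C|S|^{1/q}(A+\varepsilon)$. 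Adding back the single omitted term $\omega(k_0)\le A+\varepsilon$ and letting $\varepsilon\downarrow 0$ then produces $\bigl(\frac{1}{|S|}\sum_{k\in S}\omega^q\bigr)^{1/q}\le C\inf_{k\in S}\omega(k)$, i.e., $\omega\in\mathcal{A}(1,q)$. Crucially, in contrast with Theorem~\ref{t1.4}(ii), no monotonicity of $\omega$ on $\mathrm{supp}\,\omega$ is required here, because the lower bound $I_\alpha\delta_{k_0}(k)\gtrsim|S|^{-1/q}$ is uniform on all of $S\setminus\{k_0\}$ without any delicate geometric argument.

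\textbf{Sufficiency.} The plan is to establish a discrete Welland-type pointwise bound and to exploit the sequence-space inclusion $l^1\hookrightarrow l^{p_1}$ (a phenomenon specific to the discrete setting) to upgrade Theorem~\ref{t1.3} into an endpoint strong-type estimate for an auxiliary fractional maximal operator. Splitting the sum defining $I_\alpha x(k)$ at an adjustable scale $t\ge 1$ and performing dyadic bookkeeping, for any fixed $\varepsilon\in(0,\min\{\alpha,1-\alpha\})$ one would obtain
\begin{equation*}
\sum_{0<|k-i|\le t}\frac{|x(i)|}{|k-i|^{1-\alpha}}\le Ct^\varepsilon M_{\alpha-\varepsilon}x(k),\qquad \sum_{|k-i|>t}\frac{|x(i)|}{|k-i|^{1-\alpha}}\le Ct^{-\varepsilon}M_{\alpha+\varepsilon}x(k),
\end{equation*}
and optimization in $t$ then delivers the Welland bound $|I_\alpha x(k)|\le C[M_{\alpha-\varepsilon}x(k)]^{1/2}[M_{\alpha+\varepsilon}x(k)]^{1/2}$. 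Since $(\mathrm{avg}\,\omega^{-p_1'})^{1/p_1'}\le\sup\omega^{-1}$, the hypothesis $\omega\in\mathcal{A}(1,q)$ implies $\omega\in\mathcal{A}(p_1,q)$ for every $p_1>1$; applying Theorem~\ref{t1.3} to $M_{\alpha-\varepsilon}$ with $p_1=1/(1-\varepsilon)$ (so that $1/q=1/p_1-(\alpha-\varepsilon)$) produces $M_{\alpha-\varepsilon}\colon l_{\omega^{p_1}}^{p_1}\to l_{\omega^q}^q$, and the discrete inclusion $\|x\|_{l_{\omega^{p_1}}^{p_1}}=\|x\omega\|_{l^{p_1}}\le\|x\omega\|_{l^1}=\|x\|_{l_\omega^1}$ upgrades this to the strong-type $M_{\alpha-\varepsilon}\colon l_\omega^1\to l_{\omega^q}^q$. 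I would then couple this with a compatible control on the $M_{\alpha+\varepsilon}$-factor, obtained from Proposition~\ref{p2.3}(iv) for a nearby pair $(p_2,\tilde q)$, and balance the two factors through Chebyshev and Hölder so that the left side of the distributional inequality is measured in $\omega^q$ and the right side in $\|x\|_{l_\omega^1}$, yielding $\omega^q(\{|I_\alpha x|>a\})^{1/q}\le(C/a)\|x\|_{l_\omega^1}$.

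\textbf{Main obstacle.} The delicate step is the asymmetric treatment of the two Welland factors. The inclusion trick converts the endpoint bound for $M_{\alpha-\varepsilon}$ into the strong-type $l_\omega^1\to l_{\omega^q}^q$ because $p_1=1/(1-\varepsilon)>1$, but the same route applied to $M_{\alpha+\varepsilon}$ would require $p_2=1/(1+\varepsilon)<1$, which falls outside the admissible range of Theorem~\ref{t1.3}; moreover, Theorem~\ref{t3.2} cannot be invoked on either factor verbatim because the weak-type target exponents $q_{\pm\varepsilon}$ satisfying $1/q_{\pm\varepsilon}=1-(\alpha\mp\varepsilon)$ differ from $q$. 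The resolution, in the spirit of Welland's 1975 paper, is to extract the dominant, controllable $M_{\alpha-\varepsilon}$-piece through its strong-type endpoint bound, handle the residual $M_{\alpha+\varepsilon}$-factor through the shifted $\mathcal{A}(p_2,\tilde q)$-scale from Proposition~\ref{p2.3}(iv) with $\tilde q>q$, and rely on the homogeneity of the geometric-mean Welland bound in $t$ to make the exponents align so that $\omega^q$ appears on the left and $\omega$ to the first power on the right.
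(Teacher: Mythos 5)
Your first direction (the weak-type bound implies $\omega\in\mathcal{A}(1,q)$) is correct and in fact cleaner than the paper's argument: testing against the point mass $\mathcal{X}_{\{k_0\}}$ at a near-minimizer of $\omega$ gives $I_\alpha x(k)=|k-k_0|^{\alpha-1}\geq (2N)^{-1/q}$ on $S\setminus\{k_0\}$, and the $\mathcal{A}(1,q)$ condition drops out. The paper instead tests with $x=\mathcal{X}_E$ on a sublevel set $E$ of $\omega$, invokes Lemma \ref{l3.6} (valid because $|\mathcal{X}_E|$ is monotone on its support) to pass from $I_\alpha$ back to $M_\alpha$, and then reruns the sufficiency proof of Theorem \ref{t3.2}; your route avoids Lemma \ref{l3.6} entirely and makes the ``no monotonicity needed'' remark transparent. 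The only care needed is the trivial single-point case and adding back the omitted term $\omega(k_0)$, both of which you handle.

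The other direction ($\omega\in\mathcal{A}(1,q)\Rightarrow$ the weak-type bound) has a genuine gap. Your observation that $\|\omega\|_{\mathcal{A}(p_1,q)}\leq\|\omega\|_{\mathcal{A}(1,q)}$ combined with $\ell^1\hookrightarrow\ell^{p_1}$ does yield the strong-type bound $M_{\alpha-\varepsilon}\colon l_\omega^1\to l_{\omega^q}^q$ for $p_1=1/(1-\varepsilon)$, which is a nice use of discreteness. But the Welland geometric-mean bound forces you to control the companion factor $M_{\alpha+\varepsilon}x$, and there the argument does not close: from $\{|I_\alpha x|>a\}\subset\{M_{\alpha-\varepsilon}x>s\}\cup\{M_{\alpha+\varepsilon}x>t\}$ you must measure the second set with $\omega^q$, whereas any available estimate for $M_{\alpha+\varepsilon}$ at $p=1$ is a weak-type bound into $l_{\omega^{q_+}}^{q_+,weak}$ with $1/q_+=1-(\alpha+\varepsilon)\neq 1/q$, and $\omega^q(E)$ cannot be bounded by a power of $\omega^{q_+}(E)$ without an uncontrolled factor of $|E|$. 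Your ``Main obstacle'' paragraph names exactly this problem, but the proposed resolution (Proposition \ref{p2.3}(iv) on a shifted scale plus ``homogeneity'') is not an argument: Proposition \ref{p2.3}(iv) only moves along the line $1/q=1/p-\alpha$ for the original $\alpha$ and gives nothing for $M_{\alpha+\varepsilon}$ near $p=1$. The paper proves this direction by a different mechanism altogether: a good-$\lambda$ distributional inequality (the estimate \cite[(4.6)]{hao2}, valid since $\omega^q\in\mathcal{A}_1\subset\mathcal{A}_\infty$) comparing $\omega^q(\{I_\alpha x>ab\})$ with $\omega^q(\{M_\alpha x>ac\})$ plus an absorbable term, which reduces the weak-type bound for $I_\alpha$ directly to Theorem \ref{t3.2} for $M_\alpha$ at the same exponent $q$. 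You would need either that good-$\lambda$ inequality or some substitute for it; the geometric-mean route as described cannot be completed at the endpoint.
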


To prove Theorem \ref{t3.5}, we need the following lemma.
\begin{lemma}\label{l3.6}\cite[Lemma 4.9]{hao2}
Let $0<\alpha<1$ and $x=\{x(k)\}_{k\in\mathbb{Z}}$ be a sequence. If $|x|$ is a monotonic sequence on $\mathrm{supp}\,x$, then $M_\alpha x(k)\leq 2I_\alpha(|x|)(k)$ holds for every $k\in\mathbb{Z}$.
\end{lemma}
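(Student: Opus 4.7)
The plan is to prove the pointwise bound $M_{\alpha}x(k)\leq 2I_{\alpha}(|x|)(k)$ for each $k\in\mathbb{Z}$ by decomposing, for every $N\in\mathbb{N}$, the averaged sum defining $M_{\alpha}x(k)$ into its center contribution (the $i=k$ term) and its off-center contribution (the $i\in S_{k,N}\setminus\{k\}$ terms), and bounding each separately by $I_{\alpha}(|x|)(k)$. Adding the two estimates and then passing to $\sup_N$ delivers the stated inequality with constant $2$.

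For the off-center part, the key observation is that for any $N\in\mathbb{N}$ and $i\in S_{k,N}$ with $i\neq k$ one has $1\leq|k-i|\leq N\leq|S_{k,N}|=2N+1$, so $|k-i|^{1-\alpha}\leq|S_{k,N}|^{1-\alpha}$ and hence
$$\frac{|x(i)|}{|S_{k,N}|^{1-\alpha}}\leq\frac{|x(i)|}{|k-i|^{1-\alpha}}.$$
Summing over $i\in S_{k,N}\setminus\{k\}$ and enlarging the index set to $\mathbb{Z}\setminus\{k\}$ (permissible by nonnegativity of $|x|$) gives
$$\frac{1}{|S_{k,N}|^{1-\alpha}}\sum_{i\in S_{k,N},\,i\neq k}|x(i)|\leq I_{\alpha}(|x|)(k)$$
uniformly in $N$, so the off-center piece is controlled by $I_{\alpha}(|x|)(k)$ after taking $\sup_N$.

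For the center part, since $|S_{k,N}|^{1-\alpha}\geq 1$ it suffices to establish $|x(k)|\leq I_{\alpha}(|x|)(k)$ whenever $k\in\mathrm{supp}\,x$; this is where the monotonicity hypothesis enters. Without loss of generality assume $|x|$ is non-increasing on $\mathrm{supp}\,x$ (the non-decreasing case follows by the reflection $k\mapsto -k$). I would then pick $j$ to be the largest element of $\mathrm{supp}\,x$ strictly smaller than $k$, so that $|x(j)|\geq|x(k)|$, and compare $|x(k)|$ with the single nonnegative term $|x(j)|/|k-j|^{1-\alpha}$ that appears inside $I_{\alpha}(|x|)(k)$. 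Combining this center bound with the uniform off-center bound above yields $M_{\alpha}x(k)\leq 2I_{\alpha}(|x|)(k)$.

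The main obstacle is precisely the center-term comparison: when the gap $|k-j|$ is not equal to $1$, the naive estimate $|x(k)|\leq|x(j)|\leq|k-j|^{1-\alpha}\cdot I_{\alpha}(|x|)(k)$ is too lossy to give the constant $2$, so one must exploit the monotonicity more delicately, for instance by chaining the inequality along consecutive support points between $j$ and $k$ or by summing several neighbouring terms of $I_{\alpha}(|x|)(k)$ simultaneously to absorb the factor $|k-j|^{1-\alpha}$. That the monotonicity assumption cannot be dropped is shown by the Kronecker-delta example $x=\mathcal{X}_{\{k\}}$, for which $M_{\alpha}x(k)=1$ while $I_{\alpha}(|x|)(k)=0$; the monotonic decay of $|x|$ away from its extremum is exactly what rules out such isolated spikes and makes the center-term comparison possible.
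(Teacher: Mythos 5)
Your off-center estimate is correct and complete: for $i\in S_{k,N}\setminus\{k\}$ one has $1\le|k-i|\le N<|S_{k,N}|$, so $|S_{k,N}|^{\alpha-1}\le|k-i|^{\alpha-1}$ and the off-center contribution is bounded by $I_{\alpha}(|x|)(k)$ uniformly in $N$. (The paper itself offers no proof to compare against --- it imports the lemma from \cite[Lemma 4.9]{hao2} --- so the proposal must stand on its own.) The problem is the center term. You correctly reduce it to showing $|x(k)|\le I_{\alpha}(|x|)(k)$ for $k\in\mathrm{supp}\,x$, you explicitly concede that you cannot establish this when the nearest other support point is far from $k$, and that concession is fatal: the inequality you need is simply \emph{false} under the stated hypotheses. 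Take $x=\mathcal{X}_{\{k\}}$: its support is a singleton, on which $|x|$ is (vacuously) monotonic, yet $I_{\alpha}(|x|)(k)=0$ while $|x(k)|=1$. You cite exactly this example as showing that ``monotonicity cannot be dropped,'' but the delta sequence \emph{satisfies} the monotonicity hypothesis, so the example does not illustrate the necessity of the hypothesis --- it defeats your center-term strategy outright. The same happens for $x=\mathcal{X}_{\{k,k+M\}}$ with $M$ large: $|x|$ is constant, hence monotonic, on its support, $M_{\alpha}x(k)\ge|x(k)|=1$ (take $N=0$), but $I_{\alpha}(|x|)(k)=M^{\alpha-1}$ is arbitrarily small.

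Consequently the repairs you sketch cannot work: ``chaining along consecutive support points'' presupposes that consecutive support points are adjacent integers, which the hypothesis ``$|x|$ is monotonic on $\mathrm{supp}\,x$'' does not provide, and there is no reservoir of neighbouring terms of $I_{\alpha}(|x|)(k)$ to sum when the support is sparse. Also, the auxiliary point $j$ (``the largest element of $\mathrm{supp}\,x$ strictly smaller than $k$'') need not exist when $k=\min\mathrm{supp}\,x$. What your analysis actually uncovers is that the lemma, read literally as transcribed here, requires more than is stated (some control tying $|x(k)|$ to values of $|x|$ at integers adjacent to $k$, or a restriction on the geometry of the support); any honest proof must either invoke such an additional hypothesis from the source \cite{hao2} or handle the diagonal term by a mechanism you have not supplied. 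As it stands, the proposal proves only the weaker bound $M_{\alpha}x(k)\le|x(k)|+I_{\alpha}(|x|)(k)$, not the asserted $M_{\alpha}x(k)\le 2I_{\alpha}(|x|)(k)$.
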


\begin{proof}[Proof of Theorem \ref{t3.5}]
Proof of necessity. Let $S:=S_{m,N}$. We may assume that $x=\{x(k) \}_{k\in \mathbb{Z}}$ is nonnegative and supported in some symmetric interval $S$, otherwise replace $x$ with $|x|=\{|x(k)|\mathcal{X}_{\{|k|\leq M \}}(k) \}_{k\in \mathbb{Z}}$, where $M\in \mathbb{Z} _+$.
From Proposition \ref{p2.3}(i) with $\omega\in \mathcal{A}(1,q)$, it follows that $\omega^q\in \mathcal{A}_1\subset \mathcal{A}_{\infty}$.

Moreover, let $a>0$ and $b\geq 6$. By checking the estimate of \cite[(4.6)]{hao2} with $\omega^q\in \mathcal{A}_{\infty}$, there exists a constant $c=c_{\alpha,\omega,b}>0$ such that
\begin{equation}\label{eq3.7}
\begin{aligned}
\omega^q\left( \left\{ k\in \mathbb{Z} :I_{\alpha}x\left( k \right) >ab \right\} \right)\leq& 2\omega^q\left( \left\{ k\in \mathbb{Z} :M_{\alpha}x\left( k \right) >ac \right\} \right)
\\
&+\frac{b^{-q}}{2}\omega^q\left( \left\{ k\in \mathbb{Z} :I_{\alpha}x\left( k \right) >a \right\} \cap 3S \right).
\end{aligned}
\end{equation}
Next, multiply both sides of (\ref{eq3.7}) by $a^q$ and take the supremum of both sides for $a\in (0,N)$, where $N>0$. We obtain
\begin{align*}
&\underset{0<a<N}{\mathrm{sup}}a^q\omega^q\left( \left\{ k\in \mathbb{Z} :I_{\alpha}x\left( k \right) >ab \right\} \right)
\\
\leq& \underset{0<a<N}{\mathrm{sup}}2a^q\omega^q\left( \left\{ k\in \mathbb{Z} :M_{\alpha}x\left( k \right) >ac \right\} \right)
+\underset{0<a<N}{\mathrm{sup}}\frac{a^qb^{-q}}{2}\omega^q\left( \left\{ k\in \mathbb{Z} :I_{\alpha}x\left( k \right) >a \right\} \cap 3S \right).
\end{align*}
By the method of substitution, we further obtain
\begin{align*}
&\underset{0<a<bN}{\mathrm{sup}}a^qb^{-q}\omega^q\left( \left\{ k\in \mathbb{Z} :I_{\alpha}x\left( k \right) >a \right\} \right)
\\
\leq &\underset{0<a<cN}{\mathrm{sup}}2c^{-q}a^q\omega^q\left( \left\{ k\in \mathbb{Z} :M_{\alpha}x\left( k \right) >a \right\} \right) +
\underset{0<a<bN}{\mathrm{sup}}\frac{a^qb^{-q}}{2}\omega^q\left( \left\{ k\in \mathbb{Z} :I_{\alpha}x\left( k \right) >a \right\} \right).
\end{align*}
Therefore,
\begin{equation}\label{eq3.8}
\underset{0<a<bN}{\mathrm{sup}}a^q\omega^q\left( \left\{ k\in \mathbb{Z} :I_{\alpha}x\left( k \right) >a \right\} \right) \leq 4b^qc^{-q}\underset{0<a<cN}{\mathrm{sup}}a^q\omega^q\left( \left\{ k\in \mathbb{Z} :M_{\alpha}x\left( k \right) >a \right\} \right).
\end{equation}
Letting $N\rightarrow \infty$ on both sides of (\ref{eq3.8}), it implies that
\begin{equation*}
\underset{a>0}{\mathrm{sup}}\ a^q\sum_{\left\{ k\in \mathbb{Z} :I_{\alpha}x\left( k \right) >a \right\}}{\omega\left( k \right)^q}\leq 4b^qc^{-q}\ \underset{a>0}{\mathrm{sup}}\ a^q\sum_{\left\{ k\in \mathbb{Z} :M_{\alpha}x\left( k \right) >a \right\}}{\omega\left( k \right)^q}.
\end{equation*}

If $x$ doesn't have compact support, applying $|x|=\{|x(k)|\mathcal{X}_{\{|k|\leq M \}}(k)\}_{k\in \mathbb{Z}}$, $M\in \mathbb{Z}_+$ to all the above processes,
\begin{equation*}
\underset{a>0}{\mathrm{sup}}\ a^q\sum_{\left\{ k\in \mathbb{Z} :I_{\alpha}\left( \left| x \right|\mathcal{X} _{\left\{ \left| \cdot \right|\leq M \right\}} \right) \left( k \right) >a \right\}}{\omega\left( k \right)^q}\leq 4b^qc^{-q}\ \underset{a>0}{\mathrm{sup}}\ a^q\sum_{\left\{ k\in \mathbb{Z} :M_{\alpha}\left( \left| x \right|\mathcal{X} _{\left\{ \left| \cdot \right|\leq M \right\}} \right) \left( k \right) >a \right\}}{\omega\left( k \right)^q}.
\end{equation*}
Taking $M\rightarrow \infty$ on above inequality and using monotone convergence theorem, we have
\begin{equation*}
\underset{a>0}{\mathrm{sup}}\ a^q\sum_{\left\{ k\in \mathbb{Z} : \left| I_{\alpha}x\left( k \right) \right|>a \right\}}{\omega\left( k \right)^q}\leq 4b^qc^{-q}\ \underset{a>0}{\mathrm{sup}}\ a^q\sum_{\left\{ k\in \mathbb{Z} : M_{\alpha}x\left( k \right) >a \right\}}{\omega\left( k \right)^q}.
\end{equation*}
Thus, from this and Theorem \ref{t3.2} with $\omega\in \mathcal{A}(1,q)$, we deduce that (\ref{eq3.6}) holds true.

Proof of sufficiency. Fix a bounded interval $S:=S_{m,N}\subset \mathbb{Z}$. Let $A:=\underset{k\in S}{\mathrm{inf}}\ \omega(k)$. Then $0<A<\infty$. Here, we only consider the case that $S$ is not a single point set. If $S$ is a single point set, the proof is obvious. Given $\epsilon >0$, there is a subset $E\subset S$ such that, for every $k\in E$, $\omega(k) <A+\epsilon$. Let $x(k) :=\mathcal{X} _E(k)$. Obviously, $|x|$ is a monotonic sequence on $\mathrm{supp}\,x$. From this, Lemma \ref{l3.6} and (\ref{eq3.6}), it follows that
\begin{equation*}
\left( \sum_{\left\{ k\in \mathbb{Z} :M_{\alpha}x\left( k \right) >a \right\}}{\omega \left( k \right) ^q} \right) ^{\frac{1}{q}}\le \left( \sum_{\left\{ k\in \mathbb{Z} :I_{\alpha}x\left( k \right) >\small{\frac{a}{2}} \right\}}{\omega \left( k \right) ^q} \right) ^{\frac{1}{q}}\leq \frac{C}{a}\sum_{k\in \mathbb{Z}}{\left| x\left( k \right) \right|\omega \left( k \right)}.
\end{equation*}
Then by this and repeating the proof of sufficiency of Theorem \ref{t3.2} with taking $2a=|S|^{\alpha -1}\sum_{k\in S}{|x(k)|}$, we finally obtain $\omega\in \mathcal{A}(1,q)$. We finish the proof of Theorem \ref{t3.5}.
\end{proof}

The following Theorem \ref{t3.7} is a sufficient and necessary result for weight class of $\mathcal{A}(p,\infty)$, which implies that $I_{\alpha}$ has a property resembling discrete bounded mean oscillation.
\begin{theorem}\label{t3.7}
Let $0<\alpha <1$ and $p=\frac{1}{\alpha}$. Then $\omega\in \mathcal{A}(p,\infty)$ if and only if for every symmetric interval $S_{m,N}\subset \mathbb{Z}$, there exists a positive constant $C$ such that
\begin{equation}\label{eq3.9}
\left( \underset{k\in S_{m,N}}{\mathrm{sup}}\,\omega \left( k \right) \right) \frac{1}{\left| S_{m,N} \right|}\sum_{k\in S_{m,N}}{\left| I_{\alpha}x\left( k \right) -\left( I_{\alpha}x \right) _{S_{m,N}} \right|}\leq C\left( \sum_{k\in \mathbb{Z}}{\left| x\left( k \right) \omega \left( k \right) \right|^p} \right) ^{\small{\frac{1}{p}}},
\end{equation}
where $( I_{\alpha}x )_{S_{m,N}}:=\frac{1}{|S_{m,N}|}\sum_{k\in S_{m,N}}{I_{\alpha}x(k)}$.
\end{theorem}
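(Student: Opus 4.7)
The plan is to establish Theorem~\ref{t3.7} by proving the two implications separately.

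For sufficiency ($\omega\in\mathcal{A}(p,\infty)\Rightarrow(\ref{eq3.9})$), I would fix $S:=S_{m,N}$ and decompose $x=x_1+x_2$ with $x_1:=x\chi_{3S}$ and $x_2:=x-x_1$, treating the mean oscillations of $I_\alpha x_1$ and $I_\alpha x_2$ separately. For the local part, I use $\frac{1}{|S|}\sum_{k\in S}|I_\alpha x_1(k)-(I_\alpha x_1)_S|\leq\frac{2}{|S|}\sum_{k\in S}|I_\alpha x_1(k)|$, apply discrete Fubini, bound $\sum_{k\in S,\,k\neq i}|k-i|^{\alpha-1}\leq CN^\alpha$ for all $i\in 3S$, and invoke discrete H\"older with exponents $(p,p')$ together with $\omega\in\mathcal{A}(p,\infty)$ to replace $(\sum_{i\in 3S}\omega(i)^{-p'})^{1/p'}$ by $C|3S|^{1/p'}(\sup_{k\in S}\omega(k))^{-1}$. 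The arithmetic identity $\alpha-1+\frac{1}{p'}=\frac{1}{p}-1+\frac{1}{p'}=0$ makes every $N$-power cancel, leaving the bound $C(\sup_{k\in S}\omega(k))^{-1}\|x\|_{l^p_{\omega^p}}$. For the tail part, I use $|I_\alpha x_2(k)-(I_\alpha x_2)_S|\leq\max_{k'\in S}|I_\alpha x_2(k)-I_\alpha x_2(k')|$ together with the kernel smoothness $\big||k-i|^{\alpha-1}-|k'-i|^{\alpha-1}\big|\leq CN|m-i|^{\alpha-2}$ for $k,k'\in S$, $i\notin 3S$, apply H\"older, and dyadically decompose $(3S)^c$ into shells $\{2^l\cdot 3N<|m-i|\leq 2^{l+1}\cdot 3N\}$; the monotonicity $\sup_{k\in S_{m,2^{l+1}3N}}\omega(k)\geq\sup_{k\in S_{m,N}}\omega(k)$ lets me pull the $\sup$ down to the base scale, and $(2-\alpha)p'=1+p'$ makes the geometric series $\sum_l 2^{-lp'}$ converge, exactly killing the prefactor $CN$.

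For necessity, I fix $S=S_{m,N}$, set $A:=\sup_{k\in S}\omega(k)$, and test (\ref{eq3.9}) against $x(i):=\omega(i)^{-p'}\chi_R(i)$ for $R:=\{m,m+1,\dots,m+N\}$, so $\|x\|_{l^p_{\omega^p}}=W_R^{1/p}$ with $W_R:=\sum_{i\in R}\omega(i)^{-p'}$. To lower-bound the mean oscillation of $I_\alpha x$ on $S$, I introduce an auxiliary sign sequence $h:=\chi_{L_-}-\chi_{L_+}$ on $S^-:=\{m-N,\dots,m-1\}$ (extended by zero to $R$), where $L_-$ and $L_+$ are the near-$R$ and far-from-$R$ halves of $S^-$ of equal cardinality $\lfloor N/2\rfloor$. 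Since $|h|\leq 1$ and $\sum_{k\in S}h(k)=0$, one has $\sum_{k\in S}|I_\alpha x(k)-(I_\alpha x)_S|\geq\sum_{k\in S}h(k)I_\alpha x(k)$. A direct rearrangement using $I_\alpha x(m-s)=\sum_{j=0}^N\omega(m+j)^{-p'}(j+s)^{\alpha-1}$ expresses the right-hand side as $\sum_{j=0}^N\omega(m+j)^{-p'}B_j$, where each bracket $B_j$ is (up to $O(1)$ endpoint corrections) the discrete second difference $\frac{1}{\alpha}\big(2(j+\lfloor N/2\rfloor)^\alpha-j^\alpha-(j+N)^\alpha\big)$ of the concave function $x\mapsto x^\alpha/\alpha$; Taylor expansion combined with $(j+N)^{\alpha-2}\geq cN^{\alpha-2}$ uniformly in $j\in[0,N]$ yields $B_j\geq c_\alpha N^\alpha$, hence $\sum_{k\in S}h(k)I_\alpha x(k)\geq c_\alpha N^\alpha W_R$. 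Plugging into (\ref{eq3.9}) gives $AW_R^{1/p'}\leq C|S|^{1/p'}$, equivalently $A(W_R/|S|)^{1/p'}\leq C$. Running the same argument with $R':=\{m-N,\dots,m\}$ and combining via $W_R+W_{R'}\geq W_S$ with the subadditivity $(a+b)^{1/p'}\leq a^{1/p'}+b^{1/p'}$ (valid since $1/p'=1-\alpha\in(0,1)$) produces $A(W_S/|S|)^{1/p'}\leq 2C$, which is precisely $\omega\in\mathcal{A}(p,\infty)$.

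The main obstacle will be the mean-oscillation lower bound in the necessity half, because the naive two-point estimate $F(1)-F(N)\geq c_\alpha N^{\alpha-1}W_R$ for $F(s):=I_\alpha x(m-s)$ only yields $\sum_{k\in S}|I_\alpha x(k)-(I_\alpha x)_S|\gtrsim N^{\alpha-1}W_R$, a factor of $N$ weaker than what is required. The duality device $\sum|f-f_S|\geq\sum h(f-f_S)=\sum hf$ (valid for $|h|\leq 1$ and $\sum h=0$), applied with the choice of $h$ above, together with the concavity of $x\mapsto x^\alpha$, is the cleanest route I see: it spreads the pointwise gap uniformly over $S^-$ and reduces the problem to a Taylor-type lower bound on a discrete second difference of $x^\alpha$, which is an elementary calculation independent of the weight $\omega$.
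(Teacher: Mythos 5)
Your forward implication ($\omega\in\mathcal{A}(p,\infty)\Rightarrow(\ref{eq3.9})$, which the paper labels ``necessity'') follows essentially the same near/far decomposition as the paper's proof: a local piece handled by interchanging summation, the bound $\sum_{k\in S\setminus\{i\}}|k-i|^{\alpha-1}\leq CN^{\alpha}$, H\"older and the $\mathcal{A}(p,\infty)$ condition, and a tail piece handled by the kernel smoothness estimate $\bigl||k-i|^{\alpha-1}-|j-i|^{\alpha-1}\bigr|\leq C|S|\,|k-i|^{\alpha-2}$ followed by a dyadic shell decomposition in which the convergence of $\sum_l 2^{-lp'}$ absorbs the factor $|S|$; that the paper uses $2S$ where you use $3S$ is immaterial. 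The converse is where you genuinely diverge. The paper dilates: it first proves $\sum_{k\in tS\setminus\{i\}}|k-i|^{\alpha-1}<\frac{t}{4}\sum_{k\in S\setminus\{i\}}|k-i|^{\alpha-1}$ for a large fixed integer $t$, deduces $(I_\alpha x)_S\leq 2[(I_\alpha x)_S-(I_\alpha x)_{tS}]$ for nonnegative $x$ supported in $S$, controls that difference by the oscillations over $S$ and over $tS$ via (\ref{eq3.9}), and finally tests with $x=\omega^{-p'}\mathcal{X}_S$. You instead stay inside $S$: you test with $\omega^{-p'}$ on one half of $S$, lower-bound the mean oscillation by pairing against a mean-zero sign pattern $h$ on the other half (using $\sum_k|f(k)-f_S|\geq\sum_k h(k)f(k)$ for $|h|\leq 1$, $\sum_k h(k)=0$), and reduce the key estimate $\sum_k h(k)I_\alpha x(k)\gtrsim N^{\alpha}\sum_i\omega(i)^{-p'}$ to a second-difference lower bound for the concave function $s\mapsto s^{\alpha}$; the exponent bookkeeping ($1-\alpha=1/p'$, subadditivity of $t\mapsto t^{1/p'}$ to glue the two halves) checks out. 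This is a correct and arguably cleaner route, avoiding the paper's auxiliary dilation constant $t$ and the comparison of $(I_\alpha x)_S$ with $(I_\alpha x)_{tS}$, at the price of the elementary but fiddly Taylor computation for the brackets $B_j$. Two loose ends to close in a write-up: for small $N$ (e.g.\ $N\leq 2$, where $\lfloor N/2\rfloor$ may vanish) and for the $O(1)$ endpoint errors in replacing the sums defining $B_j$ by integrals, you should note that the $\mathcal{A}(p,\infty)$ bound for short intervals follows from the bound for intervals of a fixed moderate length, since enlarging $S$ only increases both $\sup_{S}\omega$ and $\sum_{S}\omega^{-p'}$ while changing $|S|$ by a bounded factor; this lets you assume $N$ large throughout.
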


For Theorem \ref{t3.7}, if the weight $\omega$ is bounded, then we further have the following corollary.

\begin{corollary}\label{c3.8}
Let $0<\alpha <1$ and $p=\frac{1}{\alpha}$. If $\omega\in \mathcal{A}(p,\infty)$ and $\omega$ is bounded, then for every symmetric interval $S_{m,N}\subset \mathbb{Z}$, there exists a positive constant $C_{\alpha}$ such that
\begin{equation*}
\left\| I_{\alpha}x \right\| _{BMO\left( \mathbb{Z} \right)}:=\underset{m\in \mathbb{Z} ,N\in \mathbb{N}}{\mathrm{sup}}\,\frac{1}{\left| S_{m,N} \right|}\sum_{k\in S_{m,N}}{\left| I_{\alpha}x\left( k \right) -\left( I_{\alpha}x \right) _{S_{m,N}} \right|\leq C_{\alpha}\left\| \omega \right\| _{l^{\infty}}^{-1}\left\| x \right\| _{l_{\omega ^p}^{p}}}.
\end{equation*}
\end{corollary}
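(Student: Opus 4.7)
The plan is to derive Corollary \ref{c3.8} directly from Theorem \ref{t3.7}. Applying Theorem \ref{t3.7} under the hypothesis $\omega\in\mathcal{A}(p,\infty)$ yields, for every symmetric interval $S_{m,N}\subset\mathbb{Z}$,
\begin{equation*}
\left(\sup_{k\in S_{m,N}}\omega(k)\right)\frac{1}{|S_{m,N}|}\sum_{k\in S_{m,N}}\bigl|I_\alpha x(k)-(I_\alpha x)_{S_{m,N}}\bigr|\le C\|x\|_{l^p_{\omega^p}},
\end{equation*}
where $C$ depends only on $\alpha$ and $\|\omega\|_{\mathcal{A}(p,\infty)(\mathbb{Z})}$.

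The next step is to upgrade the local factor $\sup_{k\in S_{m,N}}\omega(k)$ on the left to the global quantity $\|\omega\|_{l^\infty}$. Since $\omega$ is bounded, $\|\omega\|_{l^\infty}<\infty$, and combined with $\omega\in\mathcal{A}(p,\infty)$ --- equivalently $\omega^{-p'}\in\mathcal{A}_1(\mathbb{Z})$ by Proposition \ref{p2.3}(i) --- together with the discrete reverse H\"older inequality of Proposition \ref{p2.3}(ii), one obtains a comparison of the form $\sup_{k\in S_{m,N}}\omega(k)\gtrsim\|\omega\|_{l^\infty}$ uniformly in $S_{m,N}$, with implicit constant depending only on $\|\omega\|_{\mathcal{A}(p,\infty)(\mathbb{Z})}$. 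Plugging this into the displayed inequality and dividing through yields, for every $S_{m,N}$,
\begin{equation*}
\frac{1}{|S_{m,N}|}\sum_{k\in S_{m,N}}\bigl|I_\alpha x(k)-(I_\alpha x)_{S_{m,N}}\bigr|\le C_\alpha\|\omega\|_{l^\infty}^{-1}\|x\|_{l^p_{\omega^p}}.
\end{equation*}
Taking the supremum over $(m,N)\in\mathbb{Z}\times\mathbb{N}$ on the left-hand side gives exactly $\|I_\alpha x\|_{BMO(\mathbb{Z})}$, completing the argument.

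The hardest step is the comparison $\sup_{k\in S_{m,N}}\omega(k)\gtrsim\|\omega\|_{l^\infty}$: while the upper bound $\sup_{k\in S_{m,N}}\omega(k)\le\|\omega\|_{l^\infty}$ is an immediate consequence of the boundedness of $\omega$, the matching lower bound is delicate and is where both the $\mathcal{A}_1$-reverse-H\"older structure of $\omega^{-p'}$ and the finiteness of $\|\omega\|_{l^\infty}$ must be used in concert. Once this comparison is secured, the remainder of the proof --- application of Theorem \ref{t3.7}, division, and passage to the supremum --- is straightforward book-keeping.
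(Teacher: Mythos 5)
The paper states Corollary \ref{c3.8} without any proof, presenting it as an immediate consequence of Theorem \ref{t3.7}, so there is no argument of record to compare yours against; your first and last steps (apply Theorem \ref{t3.7}, divide, take the supremum over $(m,N)$) are exactly the natural reduction. The problem is the middle step. You correctly identify that everything hinges on the comparison $\sup_{k\in S_{m,N}}\omega(k)\gtrsim\|\omega\|_{l^{\infty}}$ uniformly over all symmetric intervals, but you do not prove it, and it is false under the stated hypotheses. Taking $S_{m,N}$ to be a singleton $\{m\}$, your claim reads $\omega(m)\ge c\,\|\omega\|_{l^{\infty}}$ for every $m\in\mathbb{Z}$, i.e.\ $\inf_{k}\omega(k)\gtrsim\sup_{k}\omega(k)$, which would force $\omega$ to be comparable to a constant. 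Neither hypothesis delivers this: $\omega\in\mathcal{A}(p,\infty)$ is equivalent to $\omega^{-p^{\prime}}\in\mathcal{A}_1(\mathbb{Z})$, and boundedness of $\omega$ only bounds $\omega^{-p^{\prime}}$ from \emph{below}, whereas your claim needs $\omega^{-p^{\prime}}$ bounded from \emph{above}. There exist $\mathcal{A}_1(\mathbb{Z})$ weights bounded below but unbounded above: for $0<s<1$ the bump $u_A(k):=\max\bigl(1,A(1+|k-m_0|)^{-s}\bigr)$ lies in $\mathcal{A}_1(\mathbb{Z})$ with constant independent of $A$ and $m_0$, and a supremum of such bumps with heights $A_j\to\infty$ at sufficiently separated centers $m_j$ is still in $\mathcal{A}_1(\mathbb{Z})$. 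The corresponding $\omega=u^{-1/p^{\prime}}$ is bounded and belongs to $\mathcal{A}(p,\infty)$, yet $\omega(m_j)\to 0$, so $\sup_{k\in\{m_j\}}\omega(k)\to 0$ while $\|\omega\|_{l^{\infty}}=1$. The reverse H\"older inequality of Proposition \ref{p2.3}(ii) cannot rescue this: it only yields information internal to each $S_{m,N}$ and says nothing about how $\sup_{S_{m,N}}\omega$ compares with the global supremum.

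What Theorem \ref{t3.7} honestly gives after division is
$\frac{1}{|S_{m,N}|}\sum_{k\in S_{m,N}}|I_{\alpha}x(k)-(I_{\alpha}x)_{S_{m,N}}|\le C\bigl(\sup_{k\in S_{m,N}}\omega(k)\bigr)^{-1}\|x\|_{l_{\omega^p}^{p}}$,
and since $\sup_{k\in S_{m,N}}\omega(k)\le\|\omega\|_{l^{\infty}}$ the factor $\bigl(\sup_{k\in S_{m,N}}\omega(k)\bigr)^{-1}$ is bounded \emph{below}, not above, by $\|\omega\|_{l^{\infty}}^{-1}$; taking the supremum over all intervals only yields $\|I_{\alpha}x\|_{BMO(\mathbb{Z})}\le C\bigl(\inf_{k\in\mathbb{Z}}\omega(k)\bigr)^{-1}\|x\|_{l_{\omega^p}^{p}}$. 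So the difficulty you flagged as ``the hardest step'' is a genuine gap, not a technicality: to obtain the corollary as literally stated one needs the extra hypothesis that $\omega$ is bounded away from zero (in which case, being also bounded, $\omega\asymp\mathrm{const}$ and the statement is nearly vacuous), or else the conclusion should be restated with $\bigl(\inf_{k}\omega(k)\bigr)^{-1}$ in place of $\|\omega\|_{l^{\infty}}^{-1}$. This appears to be a defect of the corollary itself, but your proposal does not close it.
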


\begin{proof}[Proof of Theorem \ref{t3.7}]
Let $S:=S_{m,N}$. If $S$ is a single point set, the proof is obvious. So we only consider the case that $S$ is not a single point set.

Proof of necessity. Let $2S=S_{m,2N}$ and $T:=(2S)^c$. Then,
\begin{equation*}
I_{\alpha}x\left( k \right) =\sum_{i\in 2S\setminus \left\{ k \right\}}{\frac{x\left( i \right)}{\left| k-i \right|^{1-\alpha}}}+\sum_{i\in T\setminus \left\{ k \right\}}{\frac{x\left( i \right)}{\left| k-i \right|^{1-\alpha}}}.
\end{equation*}
Let $B:=\underset{k\in S}{\mathrm{sup}}\ \omega(k)$. The left side of (\ref{eq3.9}) with $I_{\alpha}x$ replaced by $\sum_{i\in 2S\setminus \{ k \}}{\frac{x(i)}{|k-i|^{1-\alpha}}}$. By interchanging the order of summation, discrete H\"older's inequality and $\omega\in \mathcal{A} (p,\infty)$, we obtain
\begin{align*}
&\frac{B}{\left| S \right|}\sum_{k\in S}{\bigg| \sum_{i\in 2S\setminus \left\{ k \right\}}{\frac{\left| x\left( i \right) \right|}{\left| k-i \right|^{1-\alpha}}}-\frac{1}{\left| S \right|}\sum_{j\in S}{\sum_{i\in 2S\setminus \left\{ j \right\}}{\frac{\left| x\left( i \right) \right|}{\left| j-i \right|^{1-\alpha}}}} \bigg|}
\\
\leq& \frac{B}{\left| S \right|}\sum_{k\in S}{\sum_{i\in 2S\setminus \left\{ k \right\}}{\frac{\left| x\left( i \right) \right|}{\left| k-i \right|^{1-\alpha}}}}+\frac{B}{\left| S \right|}\sum_{j\in S}{\sum_{i\in 2S\setminus \left\{ j \right\}}{\frac{\left| x\left( i \right) \right|}{\left| j-i \right|^{1-\alpha}}}}
\\
=&\frac{2B}{\left| S \right|}\sum_{k\in S}{\sum_{i\in 2S\setminus \left\{ k \right\}}{\frac{\left| x\left( i \right) \right|}{\left| k-i \right|^{1-\alpha}}}}
=\frac{2B}{\left| S \right|}\sum_{i\in 2S}{\left| x\left( i \right) \right|\sum_{k\in S\setminus \left\{ i \right\}}{\frac{1}{\left| k-i \right|^{1-\alpha}}}}
\\
\leq &\frac{2B}{\left| S \right|}\sum_{i\in 2S}{\left| x\left( i \right) \right|\sum_{k\in S_{i,3N}\setminus \left\{ i \right\}}{\frac{1}{\left| k-i \right|^{1-\alpha}}}}
\\
\leq& \frac{4B}{\left| S \right|}\sum_{j=1}^{3N}{j^{\alpha -1}}\sum_{i\in 2S}{\left| x\left( i \right) \right|}
\leq \frac{4B}{\left| S \right|}\int_0^{3N}{y^{\alpha -1}}dy\sum_{i\in 2S}{\left| x\left( i \right) \right|}
\\
=&\frac{4B}{\alpha \left| S \right|}\left( 3N \right) ^{\alpha}\sum_{i\in 2S}{\left| x\left( i \right) \right|}
\leq C_{\alpha}B\left| S \right|^{-1+\alpha}\sum_{i\in 2S}{\left| x\left( i \right) \right|}
\\
\leq &C_{\alpha}B\left| S \right|^{-\small{\frac{1}{\boldsymbol{p}^{\prime}}}}\left( \sum_{i\in 2S}{\left| x\left( i \right) \omega\left( i \right) \right|^p} \right) ^{\small{\frac{1}{p}}}\left( \sum_{i\in 2S}{\omega\left( i \right) ^{-p^{\prime}}} \right) ^{\small{\frac{1}{p^{\prime}}}}
\\
\leq &C_{\alpha}\left( \frac{\left| 2S \right|}{\left| S \right|} \right) ^{\frac{1}{p^{\prime}}}\left( \sum_{i\in 2S}{\left| x\left( i \right) \omega \left( i \right) \right|^p} \right) ^{\small{\frac{1}{p}}}\left( \underset{i\in 2S}{\mathrm{sup}}\,\,\omega \left( k \right) \right) \left( \frac{1}{\left| 2S \right|}\sum_{i\in 2S}{\omega \left( i \right) ^{-p^{\prime}}} \right) ^{\small{\frac{1}{p^{\prime}}}}
\\
\leq &C_{\alpha}\left\| \omega \right\| _{\mathcal{A} \left( p,\infty \right) \left( \mathbb{Z} \right)}\left( \sum_{i\in 2S}{\left| x\left( i \right) \omega \left( i \right) \right|^p} \right) ^{\small{\frac{1}{p}}}\leq C_{\alpha}\left\| \omega \right\| _{\mathcal{A} \left( p,\infty \right) \left( \mathbb{Z} \right)}\left( \sum_{i\in \mathbb{Z}}{\left| x\left( i \right) \omega \left( i \right) \right|^p} \right) ^{\small{\frac{1}{p}}}.
\end{align*}

For every $k,j\in S$ and $i\in T$, we have
\begin{equation}\label{eq3.10}
\left| \left| k-i \right|^{\alpha -1}-\left| j-i \right|^{\alpha -1} \right| \leq C\left|S \right|\left|k-i \right|^{\alpha -2},
\end{equation}
which can be deduced by the continuous case in \cite[Page 270]{1974m}. The left side of (\ref{eq3.9}) with $I_{\alpha}x$ replaced by $\sum_{i\in T\setminus \{k\}}{\frac{x(i)}{|k-i|^{1-\alpha}}}$. By interchanging the order of summation, (\ref{eq3.10}) and discrete H\"older's inequality, we obtain
\begin{align*}
&\frac{B}{\left| S \right|}\sum_{k\in S}{\bigg| \sum_{i\in T}{\frac{x\left( i \right)}{\left| k-i \right|^{1-\alpha}}-\frac{1}{\left| S \right|}\sum_{j\in S}{\sum_{i\in T}{\frac{x\left( i \right)}{\left| j-i \right|^{1-\alpha}}}}} \bigg|}
\\
=&\frac{B}{\left| S \right|}\sum_{k\in S}{\bigg| \frac{1}{\left| S \right|}\sum_{j\in S}{\sum_{i\in T}{\frac{x\left( i \right)}{\left| k-i \right|^{1-\alpha}}}-\frac{1}{\left| S \right|}\sum_{j\in S}{\sum_{i\in T}{\frac{x\left( i \right)}{\left| j-i \right|^{1-\alpha}}}}} \bigg|}
\\
\leq &\frac{B}{\left| S \right|}\sum_{k\in S}{\frac{1}{\left| S \right|}}\sum_{j\in S}{\sum_{i\in T}{\left| x\left( i \right) \right|\left| \left| k-i \right|^{\alpha -1}-\left| j-i \right|^{\alpha -1} \right|}}
\\
\leq &CB\sum_{i\in T}{\left| x\left( i \right) \right|}\sum_{k\in S}{\left| k-i \right|^{\alpha -2}}
\leq CB\left| S \right|\sum_{i\in T}{\left| x\left( i \right) \right|\left| m-i \right|^{\alpha -2}}
\\
\leq &CB\left| S \right|\left( \sum_{i\in T}{\left| x\left( i \right) \omega\left( i \right) \right|}^p \right) ^{\small{\frac{1}{p}}}\left( \sum_{i\in T}{\left| m-i \right|^{\left( \alpha -2 \right) p^{\prime}}\omega\left( i \right) ^{-p^{\prime}}} \right) ^{\small{\frac{1}{p^{\prime}}}}.
\end{align*}

Next, let us show that $B|S|(\sum_{i\in T}{|m-i|^{(\alpha -2) p^{\prime}}\omega(i) ^{-p^{\prime}}}) ^{\small{\frac{1}{p^{\prime}}}}$ is bounded and independent of $S$. Let $S_k:=S_{m,2^kN}$, $k\in \mathbb{Z} _+$. Then $T=\bigcup_{k=2}^{\infty}{(S_k\setminus S_{k-1})}$. We have
\begin{align*}
&B\left| S \right|\left( \sum_{i\in T}{\left| m-i \right|^{\left( \alpha -2 \right) p^{\prime}}\omega\left( i \right) ^{-p^{\prime}}} \right) ^{\small{\frac{1}{p^{\prime}}}} =B\left| S \right|\left( \sum_{k=2}^{\infty}{\sum_{i\in S_k\setminus S_{k-1}}{\left| m-i \right|^{\left( \alpha -2 \right) p^{\prime}}\omega\left( i \right) ^{-p^{\prime}}}} \right) ^{\small{\frac{1}{p^{\prime}}}}
\\
\leq& B\left| S \right|\sum_{k=2}^{\infty}{\left( \sum_{i\in S_k\setminus S_{k-1}}{\left| m-i \right|^{\left( \alpha -2 \right) p^{\prime}}\omega\left( i \right) ^{-p^{\prime}}} \right) ^{\small{\frac{1}{p^{\prime}}}}}
\\
\leq &C_{\alpha}B\sum_{k=2}^{\infty}{\left( \sum_{i\in S_k\setminus S_{k-1}}{\left( 2^k\left| S \right| \right) ^{\left( \alpha -2 \right) p^{\prime}}\left| S \right|^{p^{\prime}}\omega\left( i \right) ^{-p^{\prime}}} \right) ^{\small{\frac{1}{p^{\prime}}}}}
\\
\leq &C_{\alpha}B\sum_{k=2}^{\infty}{\left( \sum_{i\in S_k}{\left( 2^k \right) ^{-p^{\prime}}\left( 2^k \right) ^{-1}\left| S \right|^{-1}\omega\left( i \right) ^{-p^{\prime}}} \right) ^{\small{\frac{1}{p^{\prime}}}}}
\\
=&C_{\alpha}B\sum_{k=2}^{\infty}{2^{-k}}\left( \sum_{i\in S_k}{\frac{1}{2^k\left| S \right|}\omega\left( i \right) ^{-p^{\prime}}} \right) ^{\small{\frac{1}{p^{\prime}}}}
\\
<&C_{\alpha}\sum_{k=2}^{\infty}{2^{-k}}B\left( \sum_{i\in S_k}{\frac{1}{\left| S_k \right|}\omega\left( i \right) ^{-p^{\prime}}} \right) ^{\frac{1}{p^{\prime}}} \leq C_{\alpha}\left\| \omega \right\| _{\mathcal{A} \left( p,\infty \right) \left( \mathbb{Z} \right)},
\end{align*}
then $B|S|(\sum_{i\in T}{|m-i|^{(\alpha -2) p^{\prime}}\omega(i) ^{-p^{\prime}}}) ^{\small{\frac{1}{p^{\prime}}}}$ is bounded and independent of $S$.

Proof of sufficiency. Let us prove this by two steps.

Step 1: Let us prove that there is an integer $t>1$ sufficiently large such that, for every bounded interval $S$ and every $i\in S$,
\begin{equation}\label{eq3.11}
\sum_{k\in tS\setminus \left\{ i \right\}}{\left| k-i \right|^{\alpha -1}}< \frac{t}{4}\sum_{k\in S\setminus \left\{ i \right\}}{\left| k-i \right|^{\alpha -1}}.
\end{equation}
To prove the existence of $t$, the range of summation on the left side of (\ref{eq3.11}) can be enlarged to center on $i$ and the interval is twice as long as $tS$, i.e.,
\begin{align}\label{eq3.12}
\sum_{k\in tS\setminus \left\{ i \right\}}{\left| k-i \right|^{\alpha -1}}
&\leq \sum_{k\in S_{i,2tN}\setminus \left\{ i \right\}}{\left| k-i \right|^{\alpha -1}}
=2\sum_{j=1}^{2tN}{j^{\alpha -1}}\nonumber
\\
&\leq 2\int_0^{2tN}{y^{\alpha -1}}dy
=\frac{2}{\alpha} \left( 2tN \right) ^{\alpha}.
\end{align}
Similarly, the range of summation on the right side of (\ref{eq3.11}) can be reduced, i.e.,
\begin{align}\label{eq3.13}
\sum_{k\in S\setminus \left\{ i \right\}}{\left| k-i \right|^{\alpha -1}}
&\geq \sum_{k\in \left[ i-N,i-1 \right] \cap \mathbb{Z}}{\left| k-i \right|^{\alpha -1}}=\sum_{j=1}^N{j^{\alpha -1}}\nonumber
\\
&\geq \int_1^{N+1}{y^{\alpha -1}}dy
=\frac{1}{\alpha}\left[ \left( N+1 \right) ^{\alpha}-1 \right].
\end{align}
Notice that
\begin{equation*}
\frac{\frac{2}{\alpha}\left( 2tN \right) ^{\alpha}}{\frac{1}{\alpha}\left[ \left( N+1 \right) ^{\alpha}-1 \right]}=2^{2+\alpha}t^{\alpha}\frac{\frac{1}{2}N^{\alpha}}{\left( N+1 \right) ^{\alpha}-1}\enspace\mathrm{and}\enspace\lim_{N\rightarrow +\infty} \frac{\frac{1}{2}N^{\alpha}}{\left( N+1 \right) ^{\alpha}-1}=\frac{1}{2}<1,
\end{equation*}
then there exists sufficiently large $N_0\in \mathbb{N}$ such that, for any $N>N_0$, $\frac{\frac{1}{2}{N_0}^{\alpha}}{(N_0+1) ^{\alpha}-1}<1$. Therefore, for any $N>N_0$ and sufficiently large $t$, by (\ref{eq3.12}), (\ref{eq3.13}) and $0<\alpha<1$, we obtain
\begin{equation*}
\frac{\sum_{k\in tS\setminus \left\{ i \right\}}{\left| k-i \right|^{\alpha -1}}}{\sum_{k\in S\setminus \left\{ i \right\}}{\left| k-i \right|^{\alpha -1}}}\leq \frac{\frac{2}{\alpha}\left( 2tN \right) ^{\alpha}}{\frac{1}{\alpha}\left[ \left( N+1 \right) ^{\alpha}-1 \right]}<2^{2+\alpha}t^{\alpha}<\frac{t}{4},
\end{equation*}
which implies (\ref{eq3.11}).

Step 2: Let us prove $\omega\in \mathcal{A}(p,\infty)$. Fix a bounded interval $S$, let $\{x(k)\}_{k\in \mathbb{Z}}$ be a positive sequence on $S$ and 0 off $S$, and pick integer $t>1$ as in (\ref{eq3.11}). By interchanging the order of summation and (\ref{eq3.11}), we have
\begin{align*}
\left( I_{\alpha}x \right) _S-\left( I_{\alpha}x \right) _{tS}
&=\frac{1}{\left| S \right|}\sum_{k\in S}{\sum_{i\in S\setminus \left\{ k \right\}}{\frac{x\left( i \right)}{\left| k-i \right|^{1-\alpha}}}}-\frac{1}{\left| tS \right|}\sum_{k\in tS}{\sum_{i\in S\setminus \left\{ k \right\}}{\frac{x\left( i \right)}{\left| k-i \right|^{1-\alpha}}}}
\\
&=\frac{1}{\left| S \right|}\sum_{i\in S}{x\left( i \right) \bigg( \sum_{k\in S\setminus \left\{ i \right\}}{\frac{1}{\left| k-i \right|^{1-\alpha}}-\frac{\left| S \right|}{\left| tS \right|}\sum_{k\in tS\setminus \left\{ i \right\}}{\frac{1}{\left| k-i \right|^{1-\alpha}}}} \bigg)}
\\
&\geq \frac{1}{\left| S \right|}\sum_{i\in S}{x\left( i \right) \bigg( \sum_{k\in S\setminus \left\{ i \right\}}{\frac{1}{\left| k-i \right|^{1-\alpha}}-\frac{2}{t}\sum_{k\in tS\setminus \left\{ i \right\}}{\frac{1}{\left| k-i \right|^{1-\alpha}}}} \bigg)}
\\
&> \frac{1}{2\left| S \right|}\sum_{i\in S}{x\left( i \right)}\sum_{k\in S\setminus \left\{ i \right\}}{\frac{1}{\left| k-i \right|^{1-\alpha}}}
\\
&=\frac{1}{2\left| S \right|}\sum_{k\in S}{\sum_{i\in S\setminus \left\{ k \right\}}{\frac{x\left( i \right)}{\left| k-i \right|^{1-\alpha}}}}
=\frac{1}{2}\left( I_{\alpha}x \right) _S.
\end{align*}
From this and (\ref{eq3.9}), it follows that
\begin{align}\label{eq3.14}
\left( I_{\alpha}x \right) _S&< 2\left[ \left( I_{\alpha}x \right) _S-\left( I_{\alpha}x \right) _{tS} \right]
\leq \frac{2}{\left| S \right|}\sum_{k\in S}{\left| \left( I_{\alpha}x \right) _S-\left( I_{\alpha}x \right) _{tS} \right|}\nonumber
\\
&\leq \frac{2}{\left| S \right|}\sum_{k\in S}{\left| \left( I_{\alpha}x \right) _S-I_{\alpha}x\left( k \right) \right|}+\frac{2}{\left| S \right|}\sum_{k\in S}{\left| I_{\alpha}x\left( k \right) -\left( I_{\alpha}x \right) _{tS} \right|}\nonumber
\\
&\leq \frac{2}{\left| S \right|}\sum_{k\in S}{\left| \left( I_{\alpha}x \right) _S-I_{\alpha}x\left( k \right) \right|}+\frac{2\left| tS \right|}{\left| S \right|}\frac{1}{\left| tS \right|}\sum_{k\in tS}{\left| I_{\alpha}x\left( k \right) -\left( I_{\alpha}x \right) _{tS} \right|}\nonumber
\\
&\leq C\left( 2+4t \right) \left( \underset{k\in S}{\mathrm{sup}}\ \omega\left( k \right) \right) ^{-1}\left( \sum_{k\in \mathbb{Z}}{\left| x\left( k \right) \omega\left( k \right) \right|^p} \right) ^{\small{\frac{1}{p}}}.
\end{align}
By a simple calculation, we have
\begin{align*}
\frac{1}{\left| S \right|}\sum_{k\in S}{\sum_{i\in S}{\frac{\omega\left( i \right) ^{-p^{\prime}}}{\left| S \right|^{1-\alpha}}}}&=\frac{1}{\left| S \right|}\sum_{k\in S}{\sum_{i\in S\setminus \left\{ k \right\}}{\frac{\omega\left( i \right) ^{-p^{\prime}}}{\left| S \right|^{1-\alpha}}}}+\frac{1}{\left| S \right|}\frac{1}{\left| S \right|}\sum_{k\in S}{\sum_{i\in S}{\frac{\omega\left( i \right) ^{-p^{\prime}}}{\left| S \right|^{1-\alpha}}}}
\\
\Longrightarrow \frac{1}{\left| S \right|}\sum_{k\in S}{\sum_{i\in S}{\frac{\omega\left( i \right) ^{-p^{\prime}}}{\left| S \right|^{1-\alpha}}}}&=\frac{1}{\left| S \right|-1}\sum_{k\in S}{\sum_{i\in S\setminus \left\{ k \right\}}{\frac{\omega\left( i \right) ^{-p^{\prime}}}{\left| S \right|^{1-\alpha}}}}.
\end{align*}
Pick $x(i) =\omega(i) ^{-p^{\prime}}\mathcal{X} _S(i)$. By above equality and (\ref{eq3.14}), we obtain
\begin{align*}
&\left| S \right|^{\alpha -1}\sum_{i\in S}{\omega\left( i \right) ^{-p^{\prime}}}=\frac{1}{\left| S \right|}\sum_{k\in S}{\sum_{i\in S}{\frac{\omega\left( i \right) ^{-p^{\prime}}}{\left| S \right|^{1-\alpha}}}}
\\
=&\frac{\left| S \right|}{\left| S \right|-1}\frac{1}{\left| S \right|}\sum_{k\in S}{\sum_{i\in S\setminus \left\{ k \right\}}{\frac{\omega\left( i \right) ^{-p^{\prime}}}{\left| S \right|^{1-\alpha}}}}<\frac{\left| S \right|}{\left| S \right|-1}\frac{1}{\left| S \right|}\sum_{k\in S}{\sum_{i\in S\setminus \left\{ k \right\}}{\frac{\omega\left( i \right) ^{-p^{\prime}}}{\left| k-i \right|^{1-\alpha}}}}
\\
<& \frac{2}{\left| S \right|}\sum_{k\in S}{\sum_{i\in S\setminus \left\{ k \right\}}{\frac{\omega\left( i \right) ^{-p^{\prime}}\mathcal{X} _S\left( i \right)}{\left| k-i \right|^{1-\alpha}}}}=\frac{2}{\left| S \right|}\sum_{k\in S}{\sum_{i\in S\setminus \left\{ k \right\}}{\frac{x\left( i \right)}{\left| k-i \right|^{1-\alpha}}}}=2\left( I_{\alpha}x \right) _S
\\
<& C_{t}\left( \underset{k\in S}{\mathrm{sup}}\ \omega\left( k \right) \right) ^{-1}\left( \sum_{k\in S}{\left| w\left( k \right) ^{-p^{\prime}}\omega\left( k \right) \right|^p} \right) ^{\small{\frac{1}{p}}}.
\end{align*}
By $0<\alpha <1$, $p=\frac{1}{\alpha}$, $p^{\prime}=\frac{1}{1-\alpha}$ and simple calculation, we immediately obtain $\omega\in \mathcal{A} (p,\infty)$. We finish the proof of Theorem \ref{t3.7}.
\end{proof}

\section{Applications \label{s4}}
In this section, we prove the boundedness of discrete fractional maximal operator and discrete Riesz potential on discrete weighted Lebesgue spaces. We also give some applications for the cases $\omega\in \mathcal{A}(1,q)$ and $\omega\in \mathcal{A}(p,\infty)$.

The following Theorem \ref{t4.1} is a supplement of Theorem \ref{t3.2}.

\begin{theorem}\label{t4.1}
Let $0<\alpha<1$, $q_0>1$ satisfying $\frac{1}{q_0}=1-\alpha$ and $\omega\in \mathcal{A}(1,q_0)$. Then there exist a positive constant $C$, $q_1>q_0$ and $1<p_1<\frac{1}{\alpha}$ such that $\frac{1}{q_1}=\frac{1}{p_1}-\alpha$ and
\begin{equation}\label{eq4.1}
\left\| M_{\alpha}x \right\| _{l_{\omega ^q}^{q}}\leq C\left\| x \right\| _{l_{\omega ^p}^{p}},\enspace t\in \left[ 0,1 \right),\enspace \frac{1}{p}=t+\frac{1-t}{p_1},\enspace \frac{1}{q}=\frac{t}{q_0}+\frac{1-t}{q_1}.
\end{equation}
\end{theorem}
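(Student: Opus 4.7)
The plan is to combine the weak-type endpoint from Theorem~\ref{t3.2} with a strong-type endpoint from Theorem~\ref{t1.3} and to interpolate between them. First, I would apply Proposition~\ref{p2.3}(iv) to the given $\omega\in\mathcal{A}(1,q_0)$ to produce $p_1\in(1,1/\alpha)$ and $q_1\in(q_0,\infty)$ with $1/q_1=1/p_1-\alpha$ and $\omega\in\mathcal{A}(p_1,q_1)$. Theorem~\ref{t1.3} then supplies the strong-type bound
\[
\|M_\alpha x\|_{l^{q_1}_{\omega^{q_1}}}\le C_1\|x\|_{l^{p_1}_{\omega^{p_1}}}
\]
(the endpoint $t=0$), while Theorem~\ref{t3.2} supplies the weak-type bound
\[
\|M_\alpha x\|_{l^{q_0,weak}_{\omega^{q_0}}}\le C_0\|x\|_{l^1_\omega}
\]
(the endpoint $t=1$). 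These two estimates are the input to the interpolation.

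Next I would interpolate between the two inequalities. The key structural feature is that both pairs of endpoint weights have the common-base form $\omega^{p_i}$ on the input and $\omega^{q_i}$ on the output, $i=0,1$, with $p_0=1$. Writing $\theta:=1-t\in(0,1]$, the Stein--Weiss interpolation formulas give intermediate exponents $1/p=(1-\theta)+\theta/p_1=t+(1-t)/p_1$ and $1/q=(1-\theta)/q_0+\theta/q_1=t/q_0+(1-t)/q_1$, and the interpolated input weight $\omega^{(1-\theta)\cdot 1/1}\cdot(\omega^{p_1})^{\theta/p_1}=\omega^{1-\theta}\cdot\omega^\theta=\omega$ raised to the power $p$, and analogously $\omega^q$ on the output. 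Applying the Stein--Weiss theorem (in the Marcinkiewicz form that allows a weak-type endpoint and a sublinear operator) then yields the desired inequality $\|M_\alpha x\|_{l^q_{\omega^q}}\le C\|x\|_{l^p_{\omega^p}}$ for every $t\in(0,1)$. The remaining case $t=0$ is already contained in Theorem~\ref{t1.3} applied to $(p_1,q_1)$, so the full range $t\in[0,1)$ is covered.

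The main obstacle is technical: classical Marcinkiewicz interpolation fixes the measures on both sides of the operator, whereas here both the input measure $\omega^p\,dk$ and the output measure $\omega^q\,dk$ vary with the interpolation parameter $t$. The Stein--Weiss theorem is designed for precisely this situation---endpoint weights that are powers of a single base weight---and its hypotheses are easy to verify in the present setting, since the counting measure on $\mathbb{Z}$ is $\sigma$-finite and $\omega>0$ everywhere. If a fully self-contained argument is preferred, one can rerun the standard Marcinkiewicz splitting $x=x\chi_{\{|x|\le A_\lambda\}}+x\chi_{\{|x|>A_\lambda\}}$ combined with the layer-cake identity for $\|M_\alpha x\|_{l^q_{\omega^q}}^q$; because $\omega^{q_0}$ and $\omega^{q_1}$ are both powers of the same base $\omega$, the pointwise endpoint estimates integrate correctly against $\lambda^{q-1}$ to produce exactly $\omega^q$ on the target, with the threshold $A_\lambda$ chosen so that the strong bound controls the small part and the weak bound controls the large part.
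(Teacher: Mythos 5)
Your route is genuinely different from the paper's, and the paper's is considerably simpler. Proposition~\ref{p2.3}(iv) does not merely produce the single improved pair $(p_1,q_1)$: it already asserts $\omega\in\mathcal{A}(p,q)$ for \emph{every} intermediate pair $\frac{1}{p}=t+\frac{1-t}{p_1}$, $\frac{1}{q}=\frac{t}{q_0}+\frac{1-t}{q_1}$, $t\in[0,1]$ (the reverse H\"older self-improvement of the weight is where the ``interpolation'' really happens). Since $p>1$ for every $t\in[0,1)$, the paper simply applies Theorem~\ref{t1.3} to each such pair and is done --- no operator interpolation, and no use of Theorem~\ref{t3.2} at all. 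Your approach instead fixes the one endpoint $(p_1,q_1)$, pairs the strong bound there with the weak $(1,q_0)$ bound, and interpolates the operator. What your approach buys is generality (it would apply to any sublinear operator with those two endpoint bounds, not just one for which a strong-type characterization like Theorem~\ref{t1.3} is already available); what it costs is that the burden shifts to a delicate interpolation theorem.

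That cost is where your write-up has a soft spot. Marcinkiewicz-type interpolation with change of measures on \emph{both} sides is false in general; the positive results require exactly the structure you note (all weights powers of a common base) \emph{and} the correct choice of the second Lorentz index (the Stein--Weiss identity $\bigl(L^{p_0}(w_0),L^{p_1}(w_1)\bigr)_{\theta,p}=L^{p}(w)$ holds on the diagonal second index only), so you should state and verify the hypotheses of the specific theorem you invoke rather than cite it generically. More concretely, the ``self-contained'' fallback you sketch does not work as written: the layer-cake identity for $\|M_\alpha x\|_{l^q_{\omega^q}}^q$ integrates $\lambda^{q-1}\,\omega^q(\{M_\alpha x>\lambda\})$ against the \emph{intermediate} measure $\omega^q$, while your endpoint estimates control $\omega^{q_0}(\cdot)$ and $\omega^{q_1}(\cdot)$; since $1/q$ is a harmonic (not arithmetic) mean of $1/q_0$ and $1/q_1$, the endpoint distribution bounds do not plug in directly, and one must first pass through an inequality such as $\omega^q(E)\leq \omega^{q_0}(E)^a\,\omega^{q_1}(E)^b$ with $a+b=1$, $aq_0+bq_1=q$, which only yields a weak-type intermediate bound that then still needs upgrading. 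Given that Proposition~\ref{p2.3}(iv) hands you $\omega\in\mathcal{A}(p,q)$ for all intermediate pairs for free, the direct argument is both shorter and safer.
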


\begin{proof}
If $\omega \in \mathcal{A}(1,q_0)$, then by Proposition \ref{p2.3}(iv), there exist $p_1\in(1,\frac{1}{\alpha})$ and $q_1\in(q_0,\infty)$ such that $\frac{1}{q_1}=\frac{1}{p_1}-\alpha$ and $\omega \in \mathcal{A}(p,q)$, where $\frac{1}{p}=t+\frac{1-t}{p_1}$, $\frac{1}{q}=\frac{t}{q_0}+\frac{1-t}{q_1}$, $t\in[0,1)$. From this and Theorem \ref{t1.3}, (\ref{eq4.1}) holds true and hence we finish the proof of Theorem \ref{t4.1}.
\end{proof}

Theorem \ref{t4.1} seems new even in continuous setting, which is the following Corollary \ref{c4.2}.

\begin{corollary}\label{c4.2}
Let $0<\alpha<n$, $q_0>1$ satisfying $\frac{1}{q_0}=1-\frac{\alpha}{n}$ and $\omega\in \mathcal{A}(1,q_0)$, i.e., for any cube $Q$ in $\mathbb{R}^n$,
\begin{equation*}
\underset{Q}{\mathrm{sup}}\left( \frac{1}{|Q|}\int_Q{\omega}(x)^{q_0}dx \right) ^{\frac{1}{q_0}}\left( \underset{x\in Q}{\mathrm{ess\ sup}}\,\,\frac{1}{\omega \left( x \right)} \right) <\infty.
\end{equation*}
Then there exist a positive constant $C$, $q_1>q_0$ and $1<p_1<\frac{n}{\alpha}$ such that $\frac{1}{q_1}=\frac{1}{p_1}-\frac{\alpha}{n}$ and
\begin{equation*}
\left\| \mathcal{M}_{\alpha}f \right\| _{L_{\omega ^q}^{q}}\leq C\left\| f \right\| _{L_{\omega ^p}^{p}},\enspace t\in \left[ 0,1 \right),\enspace \frac{1}{p}=t+\frac{1-t}{p_1},\enspace \frac{1}{q}=\frac{t}{q_0}+\frac{1-t}{q_1},
\end{equation*}
where $\mathcal{M}_{\alpha}f(x) :=\underset{Q}{\mathrm{sup}}\ |Q|^{-1+{{\alpha}/{n}}}\int_Q{|f(y)|dy}$.
\end{corollary}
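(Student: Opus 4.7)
The plan is to mirror the proof of Theorem \ref{t4.1} in the continuous setting, replacing discrete symmetric intervals $S_{m,N}$ by cubes $Q\subset\mathbb{R}^n$ and sums by Lebesgue integrals. First, the continuous analogue of Proposition \ref{p2.3}(iv) must be established: from $\omega\in\mathcal{A}(1,q_0)$ and the classical equivalence $\omega\in\mathcal{A}(1,q_0)\Longleftrightarrow \omega^{q_0}\in\mathcal{A}_1$ (see \cite[Page 139]{lu}), one deduces $\omega^{q_0}\in\mathcal{A}_1$, whence the classical reverse H\"older inequality for $\mathcal{A}_1$ weights supplies some $r_0>1$ with $\omega^{q_0}\in RH_{r_0}$.

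Next, pick $p_1\in(1,n/\alpha)$ and $q_1\in(q_0,\infty)$ satisfying $q_1/q_0=r_0$ and $\frac{1}{q_1}=\frac{1}{p_1}-\frac{\alpha}{n}$. Reproducing the chain of estimates in (\ref{eq2.1}), with $\frac{1}{|S_{m,N}|}\sum_{k\in S_{m,N}}$ replaced by $\frac{1}{|Q|}\int_Q$ throughout, yields $\omega\in\mathcal{A}(p_1,q_1)$ in the continuous sense. For any $t\in[0,1]$, defining $(p,q)$ by $\frac{1}{p}=t+\frac{1-t}{p_1}$ and $\frac{1}{q}=\frac{t}{q_0}+\frac{1-t}{q_1}$ produces $r:=q/q_0\in(1,r_0]$; H\"older's inequality then forces $\omega^{q_0}\in RH_r$, and repeating the same computation delivers $\omega\in\mathcal{A}(p,q)$.

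The proof concludes by invoking the Muckenhoupt--Wheeden theorem \cite{1974m}: since $\omega\in\mathcal{A}(p,q)$ with $1<p<n/\alpha$ and $\frac{1}{q}=\frac{1}{p}-\frac{\alpha}{n}$, the fractional maximal operator $\mathcal{M}_\alpha$ is bounded from $L^p_{\omega^p}(\mathbb{R}^n)$ to $L^q_{\omega^q}(\mathbb{R}^n)$, which is exactly the desired inequality. The endpoint $t=1$, corresponding to $(p,q)=(1,q_0)$, must be excluded because at that point only a weak-type bound is available (this is the content of the continuous version of Theorem \ref{t3.2}), which is why the statement restricts to $t\in[0,1)$.

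The only potentially delicate point is the self-improvement step, namely the continuous version of Proposition \ref{p2.3}(iv). But both ingredients used in the discrete proof---the equivalence between an $\mathcal{A}(p,q)$ condition and an $\mathcal{A}_r$ condition on $\omega^q$ or $\omega^{-p'}$, and the reverse H\"older property for Muckenhoupt weights---are classical for $\mathbb{R}^n$ with cubes, and the remaining manipulation is a direct application of H\"older's inequality. Hence the transfer is routine and no genuine obstacle arises beyond those already handled in the discrete setting.
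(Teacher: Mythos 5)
Your proposal is correct and follows essentially the same route as the paper: the authors prove Corollary \ref{c4.2} by noting it is the continuous transcription of Theorem \ref{t4.1}, whose proof consists precisely of the self-improvement step (Proposition \ref{p2.3}(iv), itself built from the $\mathcal{A}(1,q_0)\Leftrightarrow\omega^{q_0}\in\mathcal{A}_1$ equivalence and reverse H\"older) followed by an appeal to the Muckenhoupt--Wheeden strong-type theorem. You have simply filled in the details of the transfer that the paper omits, including the correct reason for excluding $t=1$.
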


\begin{proof}
This proof is similar to that of the discrete version (see the proof of Theorem \ref{t4.1}). The details are omitted.
\end{proof}

When $\omega\in \mathcal{A}(1,q)$ and $x\in l_{\omega}^{1}$, then by Theorem \ref{t3.2}, $M_{\alpha}x\in l_{\omega ^q}^{q,weak}$. By the following Theorem \ref{t4.3}, we find a sufficient condition for $M_{\alpha}x\in l_{\omega ^q}^{q,weak}(S_{m,N})$, where sequence $x$ is supported in some bounded symmetric interval $S_{m,N}$ and satisfies some sum condition of log-type which is a discrete variant of \cite[Proposition 1]{welland}. Similar result also holds for discrete Riesz potential, and it is also a discrete variant of \cite[Page 19, (2.2)]{welland}.

\begin{theorem}\label{t4.3}
Let $0<\alpha <1$, $\frac{1}{q}=1-\alpha$, $\omega\in \mathcal{A}(1,q)$ and $v=\omega ^q$. If $x$ is supported in a symmetric interval $S_{m,N}\subset \mathbb{Z}$ and $\sum_{k\in S_{m,N}}{|x(k)| \omega(k) \log ^+|x(k) \omega(k)^{1-q}|}<\infty$. Then there exists a positive constant $C$ such that
\begin{small}
\begin{equation}\label{eq4.2}
\Bigg( \sum_{k\in S_{m,N}}{\left| M_{\alpha}x\left( k \right) \omega\left( k \right) \right|^q} \Bigg) ^{\small{\frac{1}{q}}}\leq C\Bigg( v\left( S_{m,N} \right) +\sum_{k\in S_{m,N}}{\left|x\left( k \right) \right| \omega\left( k \right) \log ^+\left| x\left( k \right) \omega\left( k \right) ^{1-q} \right|} \Bigg)
\end{equation}
\end{small}
and
\begin{small}
\begin{equation}\label{eq4.3}
\Bigg( \sum_{k\in S_{m,N}}{\left| I_{\alpha}x\left( k \right) \omega\left( k \right) \right|^q} \Bigg) ^{\small{\frac{1}{q}}}\leq C\Bigg( v\left( S_{m,N} \right) +\sum_{k\in S_{m,N}}{\left|x\left( k \right) \right| \omega\left( k \right) \log ^+\left| x\left( k \right) \omega\left( k \right) ^{1-q} \right|} \Bigg),
\end{equation}
\end{small}
where
\begin{equation*}
\log ^+t:=\begin{cases}
	\log t, \qquad t>1\\
	~~0~~, \quad 0\leq t\leq 1\\
\end{cases}.
\end{equation*}
\end{theorem}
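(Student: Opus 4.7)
The plan is to prove (4.2) by a level-dependent Calder\'on--Zygmund-style truncation fed into the weak-type bound of Theorem \ref{t3.2}, and then to obtain (4.3) by the same scheme with $I_\alpha$ and Theorem \ref{t3.5} in place of $M_\alpha$ and Theorem \ref{t3.2}. Throughout set $y(k) := |x(k)|\omega(k)^{1-q}$, so that $|x(k)|\omega(k) = y(k)v(k)$; write $A := \sum_k y(k)v(k) = \|x\|_{l^1_\omega}$, $B := \sum_k y(k)v(k)\log^+ y(k)$, and $\phi(\mu) := \sum_{y(k)>\mu} y(k)v(k)$, so $\phi$ is decreasing with $\phi(0)=A$.

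For (4.2) start from the layer-cake identity
\begin{equation*}
\sum_{k\in S}|M_\alpha x(k)|^q v(k)=q\int_0^\infty \lambda^{q-1}\,v(S\cap\{M_\alpha x>\lambda\})\,d\lambda
\end{equation*}
and split at $\lambda_0:=2v(S)^\alpha$. The part $\lambda\le\lambda_0$ is at most $\lambda_0^q v(S) = 2^q v(S)^q$, using $\alpha q+1=q$. For $\lambda>\lambda_0$, decompose $x=x_\lambda+x^\lambda$ with $x_\lambda:=x\mathcal{X}_{\{y>\lambda/(2v(S)^\alpha)\}}$; then $|x^\lambda(k)|\le (\lambda/(2v(S)^\alpha))\omega(k)^{q-1}\mathcal{X}_S(k)$, and H\"older's inequality with conjugate exponents $1/\alpha$ and $q$ gives
\begin{equation*}
\frac{1}{|S_{k,N}|^{1-\alpha}}\sum_{j\in S_{k,N}\cap S}\omega(j)^{q-1}\le\Bigl(\tfrac{|S_{k,N}\cap S|}{|S_{k,N}|}\Bigr)^{1/q} v(S_{k,N}\cap S)^\alpha\le v(S)^\alpha,
\end{equation*}
so $M_\alpha x^\lambda(k)\le\lambda/2$ uniformly and $\{M_\alpha x>\lambda\}\subset\{M_\alpha x_\lambda>\lambda/2\}$. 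Apply Theorem \ref{t3.2} to $x_\lambda$ (noting $\|x_\lambda\|_{l^1_\omega}=\phi(\lambda/(2v(S)^\alpha))$), substitute $\mu=\lambda/(2v(S)^\alpha)$ so that $d\lambda/\lambda=d\mu/\mu$, and reduce the tail integral to $C\int_1^\infty \mu^{-1}\phi(\mu)^q\,d\mu\le CA^{q-1}\int_1^\infty\mu^{-1}\phi(\mu)\,d\mu=CA^{q-1}B$, the final equality being the standard Fubini identity for $L\log L$. The trivial split $A=\sum_{y\le 1}yv+\sum_{1<y\le e}yv+\sum_{y>e}yv\le(1+e)v(S)+B$ combined with Young's inequality yields $A^{q-1}B\le C(v(S)^q+B^q)$, and (4.2) follows after taking $q$-th roots.

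For (4.3) the same scheme runs with $I_\alpha$ and Theorem \ref{t3.5}. The only new ingredient is the pointwise estimate $|I_\alpha x^\lambda(k)|\le\lambda/2$ for $k\in S$. To obtain it, bound $\sum_{i\in S,\,i\ne k}\omega(i)^{q-1}|k-i|^{\alpha-1}$: since Proposition \ref{p2.3}(i) gives $\omega^q\in\mathcal{A}_1$, the sup and inf of $\omega^q$ on $S$ are comparable, so $\omega(i)\le C\omega(k)$ for every $i,k\in S$. Combined with $\sum_{i\in S,\,i\ne k}|k-i|^{\alpha-1}\le C|S|^\alpha$ and $|S|\omega(k)^q\le Cv(S)$ (again from the $\mathcal{A}_1$ property), this gives $\sum_{i\in S,\,i\ne k}\omega(i)^{q-1}|k-i|^{\alpha-1}\le Cv(S)^\alpha$, so $|I_\alpha x^\lambda(k)|\le\lambda/2$ after absorbing the new constant into $\lambda_0$; the layer-cake argument then proceeds verbatim. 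The main technical obstacle is calibrating the truncation threshold $\tau(\lambda)=\lambda/(2v(S)^\alpha)$ so that simultaneously $M_\alpha x^\lambda$ (resp.\ $I_\alpha x^\lambda$) is uniformly dominated by $\lambda/2$ while $\lambda_0^q v(S)$ precisely matches the target $v(S)^q$; both requirements exploit the endpoint identity $1-\alpha=1/q$ in an essential way.
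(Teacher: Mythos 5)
Your proof of \eqref{eq4.2} is correct, and it takes a genuinely different route from the paper: the paper decomposes $g=x\omega^{-q\alpha}$ into dyadic level sets $E_n$ and sums a family of strong-type $(p_n,q_n)$ bounds with exponents tending to the endpoint (via Proposition \ref{p2.3}(iv) and Theorem \ref{t4.1}), whereas you run a layer-cake/truncation argument that only needs the weak $(1,q)$ bound of Theorem \ref{t3.2}. Your key step --- that $M_\alpha x^\lambda(k)\le \mu\,v(S)^\alpha$ by H\"older with exponents $1/\alpha$ and $q$, using $(q-1)/\alpha=q$ and $\alpha-1=-1/q$ --- is sound and uses no weight hypothesis at all, and the Fubini identity $\int_1^\infty\mu^{-1}\phi(\mu)\,d\mu=B$ together with $\phi\le A\le(1+e)v(S)+B$ closes the estimate cleanly.

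The proof of \eqref{eq4.3}, however, has a genuine gap in the pointwise bound $|I_\alpha x^\lambda(k)|\le\lambda/2$. You justify it by claiming that $\omega^q\in\mathcal{A}_1$ forces $\sup_{i\in S}\omega(i)\le C\inf_{i\in S}\omega(i)$ and $|S|\,\omega(k)^q\le Cv(S)$. Both are false: the $\mathcal{A}_1$ condition bounds the \emph{average} of $\omega^q$ over $S$ by its infimum, not the supremum by the infimum (nor the pointwise value by the average). For instance $\omega(k)^q=(1+|k|)^{-\delta}$ with small $\delta>0$ is in $\mathcal{A}_1(\mathbb{Z})$, yet on $S_{0,N}$ one has $\sup\omega^q=1$ while both $\inf\omega^q$ and the average are of order $N^{-\delta}$. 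The inequality you actually need, $\sum_{i\in S\setminus\{k\}}\omega(i)^{q-1}|k-i|^{\alpha-1}\le Cv(S)^\alpha$, is true but is genuinely an endpoint estimate: a single application of H\"older over all of $S$ produces $v(S)^\alpha\bigl(\sum_{i\ne k}|k-i|^{-1}\bigr)^{1/q}\sim v(S)^\alpha(\log|S|)^{1/q}$, a logarithmic loss. To repair it, split into dyadic annuli $A_j=\{i\in S:2^j\le|k-i|<2^{j+1}\}$, apply your H\"older step on each to get a contribution $Cv(S_{k,2^{j+1}}\cap S)^\alpha$, and then sum the resulting geometric series using the $\mathcal{A}_\infty$ smallness property $v(E)/v(S)\le C(|E|/|S|)^\delta$ of $v=\omega^q\in\mathcal{A}_1$ (available via Proposition \ref{p2.3}(ii)). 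Alternatively, you can sidestep the pointwise bound entirely and deduce \eqref{eq4.3} from \eqref{eq4.2} as the paper does, via the comparison $\sum_{k\in S}|I_\alpha x(k)\omega(k)|^q\le C\sum_{k\in S}|M_\alpha x(k)\omega(k)|^q$ valid for $\omega^q\in\mathcal{A}_\infty$.
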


\begin{proof}
Since $\omega\in \mathcal{A}(1,q)$, by Proposition \ref{p2.3}(iv), there exist $p_1\in(1,\frac{1}{\alpha})$ and $q_1\in(q,\infty)$ such that $\frac{1}{q_1}=\frac{1}{p_1}-\alpha$, $\omega \in \mathcal{A}(p_1,q_1)$ and $\omega \in \mathcal{A}(p_t,q_t)$, where $\frac{1}{p_t}=t+\frac{1-t}{p_1}$, $\frac{1}{q_t}=\frac{t}{q}+\frac{1-t}{q_1}$, $t\in[0,1)$. By Proposition \ref{p2.3}(i), we have $\omega \in \mathcal{A}(1,q)\Longleftrightarrow \omega ^q\in \mathcal{A} _1$, then $\omega ^q\in \mathcal{A} _{1+\frac{q_t}{p_t\prime}}$ and $\omega ^{\frac{q}{q_t}}\in \mathcal{A} (p_t,q_t)$. From this and Theorem \ref{t4.1}, it follows that
\begin{equation}\label{eq4.4}
\left( \sum_{k\in \mathbb{Z}}{\left| M_{\alpha}x\left( k \right) \omega \left( k \right) ^{\frac{q}{q_t}} \right|^{q_t}} \right) ^{\frac{1}{q_t}}\leq C\left( \sum_{k\in \mathbb{Z}}{\left| x\left( k \right) \omega \left( k \right) ^{\frac{q}{q_t}} \right|^{p_t}} \right) ^{\frac{1}{p_t}}.
\end{equation}
Define a sublinear operator $T$ by $Tg(k):=M_{\alpha}(g\omega ^{q\alpha})(k)$, where $g(k):=x(k)\omega (k)^{-q\alpha}$. Then (\ref{eq4.4}) can be written as
\begin{equation}\label{eq4.5}
\left( \sum_{k\in \mathbb{Z}}{\left| Tg\left( k \right) \right|^{q_t}v\left( k \right)} \right) ^{\frac{1}{q_t}}\le C\left( \sum_{k\in \mathbb{Z}}{\left| g\left( k \right) \right|^{p_t}}v\left( k \right) \right) ^{\frac{1}{p_t}}.
\end{equation}
Let $S:=S_{m,N}$, $E:=\{k\in S:|g(k)|\leq 1 \}$, $E_n:=\{k\in S:2^n<|g(k)|\leq 2^{n+1} \}$, $n\in \mathbb{N}$ and $g_n(k):=g(k) \mathcal{X} _{E_n}(k) $. Let $t_n:=\frac{n+1}{n+2}$, $\frac{1}{p_n}=t_n+\frac{1-t_n}{p_1}$, $\frac{1}{q_n}=\frac{t_n}{q}+\frac{1-t_n}{q_1}$ and $\frac{1}{q_n}=\frac{1}{p_n}-\alpha$. Particularly, when $|g(k)|\leq 1$, we choose $\frac{1}{\widetilde{p}}=\frac{1}{2}+\frac{1}{2p_1}$ and $\frac{1}{\widetilde{q}}=\frac{1}{2q}+\frac{1}{2q_1}$. Then by Minkowski's inequality, discrete H\"older's inequality and (\ref{eq4.5}), we have
\begin{align}\label{eq4.6}
&\left( \sum_{k\in S}{\left| Tg\left( k \right) \right|^qv\left( k \right)} \right) ^{\frac{1}{q}}\nonumber
\\
\leq& \left( \sum_{k\in S}{\left| T\left( g\mathcal{X} _E \right) \left( k \right) \right|^qv\left( k \right)} \right) ^{\frac{1}{q}}+\sum_{n=0}^{\infty}{\left( \sum_{k\in S}{\left| Tg_n\left( k \right) \right|^qv\left( k \right)} \right) ^{\frac{1}{q}}}\nonumber
\\
\leq& \left\{ \left[ \sum_{k\in S}{\left( \left| T\left( g\mathcal{X} _E \right) \left( k \right) \right|^qv\left( k \right) ^{\frac{q}{\widetilde{q}}} \right) ^{\frac{\widetilde{q}}{q}}} \right] ^{\frac{q}{\widetilde{q}}}v\left( S \right) ^{^{1-\frac{q}{\widetilde{q}}}} \right\} ^{\frac{1}{q}}\nonumber
\\
&+\sum_{n=0}^{\infty}{\left\{ \left[ \sum_{k\in S}{\left( \left| Tg_n\left( k \right) \right|^qv\left( k \right) ^{\frac{q}{q_n}} \right) ^{\frac{q_n}{q}}} \right] ^{\frac{q}{q_n}}v\left( S \right) ^{1-\frac{q}{q_n}} \right\}}^{\frac{1}{q}}\nonumber
\\
=&v\left( S \right) ^{\frac{1}{q}-\frac{1}{\widetilde{q}}}\left[ \sum_{k\in S}{\left( \left| T\left( g\mathcal{X} _E \right) \left( k \right) \right|^{\widetilde{q}}v\left( k \right) \right)} \right] ^{\frac{1}{\widetilde{q}}}+\sum_{n=0}^{\infty}{v\left( S \right) ^{\frac{1}{q}-\frac{1}{q_n}}}\left( \sum_{k\in S}{\left| Tg_n\left( k \right) \right|^{q_n}v\left( k \right)} \right) ^{\frac{1}{q_n}}\nonumber
\\
\leq& Cv\left( S \right) ^{1-\frac{1}{\widetilde{p}}}\left( \sum_{k\in S}{\left| g\left( k \right) \mathcal{X} _E\left( k \right) \right|^{\widetilde{p}}v\left( k \right)} \right) ^{\frac{1}{\widetilde{p}}}+\sum_{n=0}^{\infty}{Cv\left( S \right) ^{1-\frac{1}{p_n}}}\left( \sum_{k\in S}{\left| g_n\left( k \right) \right|^{p_n}v\left( k \right)} \right) ^{\frac{1}{p_n}}\nonumber
\\
\leq& Cv\left( S \right) ^{1-\frac{1}{\widetilde{p}}}\left( \sum_{k\in S}{v\left( k \right)} \right) ^{\frac{1}{\widetilde{p}}}+Cv\left( S \right) \sum_{n=0}^{\infty}{\left( n+2 \right) 2^{n+1}v\left( S \right) ^{-\frac{1}{p_n}}\left( \sum_{k\in E_n}{v\left( k \right)} \right) ^{\frac{1}{p_n}}}\nonumber
\\
=&Cv\left( S \right)+Cv\left( S \right) \sum_{n=0}^{\infty}{\left( n+2 \right) 2^{n+1}\left( \frac{v\left( E_n \right)}{v\left( S \right)} \right)}^{\frac{1}{p_n}}.
\end{align}

Let $K:=\{n\in \mathbb{N} :( \frac{v(E_n)}{v(S)}) ^{\frac{1}{p_n}}\leq 3^{-(n+1)} \}$ and $K^{\prime}:=\mathbb{N}\setminus K$. For $n \in K^{\prime}$, noting $\frac{v(E_n)}{v(S)}>3^{-(n+1) p_n}$, $\frac{1}{p_n}=t_n+\frac{1-t_n}{p_1}$ and $t_n=\frac{n+1}{n+2}$, it follows that $p_n-1=\frac{p_1-1}{p_1(n+1)+1}$, then
\begin{equation}\label{eq4.7}
\left( \frac{v\left( E_n \right)}{v\left( S \right)} \right) ^{\frac{1}{p_n}-1}<3^{\left( n+1 \right) \left( p_n-1 \right)}<3^{\left( n+1 \right) \frac{p_1-1}{p_1\left( n+1 \right)}}=3^{\frac{p_1-1}{p_1}}<3.
\end{equation}
Therefore, by (\ref{eq4.6}) and (\ref{eq4.7}), we obtain
\begin{align*}
&\left( \sum_{k\in S}{\left| Tg\left( k \right) \right|^qv\left( k \right)} \right) ^{\frac{1}{q}}
\\
\leq& Cv\left( S \right) +Cv\left( S \right) \sum_{n\in K}{\left( n+2 \right) \left( \frac{2}{3} \right)}^{n+1}+Cv\left( S \right) \sum_{n\in K^{\prime}}{3\left( n+2 \right) 2^{n+1}\frac{v\left( E_n \right)}{v\left( S \right)}}
\\
\leq& Cv\left( S \right) +C\sum_{n\in K^{\prime}}{\left( n+2 \right) 2^n}v\left( E_n \right)
\leq Cv\left( S \right) +C\sum_{n\in K^{\prime}}{\left( n+2 \right) \sum_{k\in E_n}{\left| g\left( k \right) \right|}v\left( k \right)}
\\
\leq& Cv\left( S \right) +C\sum_{n\in K^{\prime}}{\sum_{k\in E_n}{\left| g\left( k \right) \right|v\left( k \right)}}\log ^+\left| g\left( k \right) \right|
\\
\leq& C\left( v\left( S \right) +\sum_{k\in S}{\left| g\left( k \right) \right|v\left( k \right) \log ^+\left| g\left( k \right) \right|} \right).
\end{align*}
Taking $Tg(k) =M_{\alpha}(g\omega^{q\alpha})(k)$ and $g(k) =x(k) \omega (k)^{-q\alpha}$ into the above inequality, we finally obtain (\ref{eq4.2}).

Repeating the proof of \cite[Lemma 4.8]{hao2} with $k\in \mathbb{Z}$ replaced by $k\in S$, since $\omega^q\in \mathcal{A}_{\infty}$, we have
\begin{equation*}
\sum_{k\in S}{\left| I_{\alpha}x\left( k \right) \omega \left( k \right) \right|^q}\leq C\sum_{k\in S}{\left| M_{\alpha}x\left( k \right) \omega \left( k \right) \right|^q}.
\end{equation*}
From this and (\ref{eq4.2}), we immediately deduce (\ref{eq4.3}).
\end{proof}

The boundedness of $I_{\alpha}$ from $l_{\omega ^p}^{p}$ to $l_{\omega ^q}^{q}$ is proved by \cite[Theorem 4.4(i)]{hao2} for $\omega\in \mathcal{A}(p,q)$, but the related proof is complicated, and now, inspired by \cite[Theorem 1]{welland}, we can give another simple proof. Moreover, there is a gap in the proof of \cite[Theorem 4.4(ii)]{hao2}, precisely, $\|I_{\alpha}x\| _{l_{\omega ^q}^{q}}\sim\|M_{\alpha}x\| _{l_{\omega ^q}^{q}}$ is wrong for any $x\in l_{\omega ^p}^{p}(\mathbb{Z})$. We will correct the proof of \cite[Theorem 4.4(ii)]{hao2} in the following proof.

\begin{proof}[Proof of Theorem \ref{t1.4}]
Proof of (i). Let $R>0$ be a fixed real number. Then
\begin{equation*}
I_{\alpha}x\left( k \right) =\sum_{\left\{ i\in \mathbb{Z} :0<\left| k-i \right|\leq R \right\}}{\frac{x\left( i \right)}{\left| k-i \right|^{1-\alpha}}}+\sum_{\left\{ i\in \mathbb{Z} :\left| k-i \right|> R \right\}}{\frac{x\left( i \right)}{\left| k-i \right|^{1-\alpha}}}=:\mathrm{I+II}.
\end{equation*}
Let $\epsilon \in (0,\mathrm{min}\{\alpha,1-\alpha\})$. We have
\begin{equation*}
\left| \mathrm{I} \right|\leq \sum_{j=0}^{\infty}{\sum_{2^{-j-1}R< \left| k-i \right|\leq 2^{-j}R}{\frac{\left| x\left( i \right) \right|}{\left| k-i \right|^{1-\alpha}}}}\leq \sum_{j=0}^{\infty}{\left( 2^{-j-1}R \right) ^{\alpha -1}}\sum_{2^{-j-1}R< \left| k-i \right|\leq 2^{-j}R}{\left| x\left( i \right) \right|}.
\end{equation*}
We only consider the case $2^{-j}R>\frac{1}{2}$ for $j\in \mathbb{N}$, or else $\{2^{-j-1}R< |k-i|\leq 2^{-j}R \} =\emptyset$, $j\in \mathbb{N}$. Then we further have
\begin{align*}
\left| \mathrm{I} \right|\leq& \sum_{j=0}^{\infty}{\left( 2^{-j-1}R \right) ^{\alpha -1}\left[ 3\left( 2^{-j+1}R \right) \right] ^{1-\alpha +\epsilon}}\frac{1}{\left( 2\lfloor 2^{-j}R \rfloor +1 \right) ^{1-\alpha +\epsilon}}\sum_{\left| k-i \right|\leq \lfloor 2^{-j}R \rfloor}{\left| x\left( i \right) \right|}
\\
\leq& \frac{12^{1-\alpha +\epsilon}}{2^{\epsilon}-1}R^{\epsilon}M_{\alpha -\epsilon}x\left( k \right).
\end{align*}
Similarly, we only consider the case $2^{j}R>\frac{1}{2}$ for $j\in \mathbb{N}$, then we obtain
\begin{align*}
\left| \mathrm{II} \right|\leq& \sum_{j=1}^{\infty}{\sum_{2^{j-1}R< \left| k-i \right|\leq 2^jR}{\frac{\left| x\left( i \right) \right|}{\left| k-i \right|^{1-\alpha}}}}\leq \sum_{j=1}^{\infty}{\left( 2^{j-1}R \right) ^{\alpha -1}}\sum_{2^{j-1}R< \left| k-i \right|\leq 2^jR}{\left| x\left( i \right) \right|}
\\
\leq& \sum_{j=1}^{\infty}{\left( 2^{j-1}R \right) ^{\alpha -1}\left[ 3\left( 2^{j+1}R \right) \right] ^{1-\alpha -\epsilon}\frac{1}{\left( 2\lfloor 2^jR \rfloor +1 \right) ^{1-\alpha -\epsilon}}}\sum_{\left| k-i \right|\leq \lfloor 2^jR \rfloor}{\left| x\left( i \right) \right|}
\\
\leq& \frac{12^{1-\alpha -\epsilon}}{1-2^{-\epsilon}}R^{-\epsilon}M_{\alpha +\epsilon}x\left( k \right).
\end{align*}
Therefore,
\begin{equation*}
\left| I_{\alpha}x\left( k \right) \right|\leq \frac{12^{1-\alpha +\epsilon}}{2^{\epsilon}-1}R^{\epsilon}M_{\alpha -\epsilon}x\left( k \right) +\frac{12^{1-\alpha -\epsilon}}{1-2^{-\epsilon}}R^{-\epsilon}M_{\alpha +\epsilon}x\left( k \right).
\end{equation*}
Let $C_{\alpha}:=\max \left\{ \frac{12^{1-\alpha +\epsilon}}{2^{\epsilon}-1},\frac{12^{1-\alpha -\epsilon}}{1-2^{-\epsilon}} \right\}$. By choosing $R^{\epsilon}=(\frac{M_{\alpha +\epsilon}x(k)}{M_{\alpha -\epsilon}x(k)}) ^{\frac{1}{2}}$, we have
\begin{equation}\label{eq4.8}
\left| I_{\alpha}x\left( k \right) \right|\leq C_{\alpha}\left[ M_{\alpha +\epsilon}x\left( k \right) M_{\alpha -\epsilon}x\left( k \right) \right] ^{\small{\frac{1}{2}}}.
\end{equation}
Let $\frac{1}{q_{\epsilon}}=\frac{1}{p}-(\alpha +\epsilon)$ and $\frac{1}{\widetilde{q}_{\epsilon}}=\frac{1}{p}-(\alpha -\epsilon)$. If $\epsilon$ is small enough, by discrete reverse H\"older's inequality, then $\omega \in \mathcal{A}(p,q)$ implies $\omega \in \mathcal{A} (p,q_{\epsilon})$; by discrete H\"older's inequality, then $\omega \in \mathcal{A}(p,q)$ implies $\omega \in \mathcal{A}(p,\widetilde{q}_{\epsilon})$. Let $p_1:=\frac{2q_{\epsilon}}{q}>1$ and $p_2:=\frac{2\widetilde{q}_{\epsilon}}{q}>1$. Then
\begin{equation*}
\frac{1}{p_1}+\frac{1}{p_2}=\frac{q}{2q_{\epsilon}}+\frac{q}{2\widetilde{q}_{\epsilon}}=q\left( \frac{1}{p}-\alpha \right) =1,\enspace\frac{qp_1}{2}=q_{\epsilon}\enspace\mathrm{and}\enspace\frac{qp_2}{2}=\widetilde{q}_{\epsilon}.
\end{equation*}
From this, (\ref{eq4.8}), discrete H\"older's inequality and applying (\ref{eq1.1}) for the operators $M_{\alpha +\epsilon}$ and $M_{\alpha -\epsilon}$, we conclude that
\begin{align*}
&\sum_{k\in \mathbb{Z}}{\left| I_{\alpha}x\left( k \right) \omega \left( k \right) \right|^q}
\\
\leq& C_{\alpha}\sum_{k\in \mathbb{Z}}{\left[ M_{\alpha +\epsilon}x\left( k \right) \omega \left( k \right) \right] ^{\small{\frac{q}{2}}}}\left[ M_{\alpha -\epsilon}x\left( k \right) \omega \left( k \right) \right] ^{\small{\frac{q}{2}}}
\\
\leq& C_{\alpha}\left( \sum_{k\in \mathbb{Z}}{\left[ M_{\alpha +\epsilon}x\left( k \right) \omega \left( k \right) \right] ^{\small{\frac{qp_1}{2}}}} \right) ^{\frac{1}{p_1}}\left( \sum_{k\in \mathbb{Z}}{\left[ M_{\alpha -\epsilon}x\left( k \right) \omega \left( k \right) \right] ^{\small{\frac{qp_2}{2}}}} \right) ^{\frac{1}{p_2}}
\\
=&C_{\alpha}\left( \sum_{k\in \mathbb{Z}}{\left[ M_{\alpha +\epsilon}x\left( k \right) \omega \left( k \right) \right] ^{q_{\epsilon}}} \right) ^{\frac{1}{q_{\epsilon}}\frac{q_{\epsilon}}{p_1}}\left( \sum_{k\in \mathbb{Z}}{\left[ M_{\alpha -\epsilon}x\left( k \right) \omega \left( k \right) \right] ^{\widetilde{q}_{\epsilon}}} \right) ^{\frac{1}{\widetilde{q}_{\epsilon}}\frac{\widetilde{q}_{\epsilon}}{p_2}}
\\
\leq& C_{\alpha}\left( \sum_{k\in \mathbb{Z}}{\left| x\left( k \right) \omega \left( k \right) \right|^p} \right) ^{\frac{q_{\epsilon}}{pp_1}}\left( \sum_{k\in \mathbb{Z}}{\left| x\left( k \right) \omega \left( k \right) \right|^p} \right) ^{\frac{\widetilde{q}_{\epsilon}}{pp_2}}
\\
=&C_{\alpha}\left( \sum_{k\in \mathbb{Z}}{\left| x\left( k \right) \omega \left( k \right) \right|^p} \right) ^{\frac{q}{p}}.
\end{align*}

Proof of (ii). Let $S:=S_{m,N}$, $x(k):=\omega (k) ^{-p^{\prime}}\mathcal{X} _S(k)$ and $2\lambda =|S|^{\alpha-1}\sum_{k\in S}{| x(k)|}$. For every $k\in S$, we have $\overline{M}_{\alpha}x(k)\geq 2\lambda >\lambda$ and hence $S\subset \{k\in \mathbb{Z} :\overline{M}_{\alpha}x(k)>\lambda \}$. If $\omega$ is a discrete weighted sequence which is monotonic on $\mathrm{supp}\,\omega$, then $|x|$ is a monotonic sequence on $\mathrm{supp}\,x$. Then by Lemma \ref{l3.3}, Lemma \ref{l3.6} and that $I_\alpha$ is bounded from $l_{\omega ^p}^{p}$ to $l_{\omega ^q}^{q}$, we obtain
\begin{align*}
\lambda \left( \sum_{k\in S}{\omega \left( k \right) ^q} \right) ^{\frac{1}{q}}\leq& \lambda \left( \sum_{\left\{ k\in \mathbb{Z} :\overline{M}_{\alpha}x\left( k \right) >\lambda \right\}}{\omega \left( k \right) ^q} \right) ^{\frac{1}{q}}\leq \left( \sum_{k\in \mathbb{Z}}{\left| \overline{M}_{\alpha}x\left( k \right) \right|^q}\omega \left( k \right) ^q \right) ^{\frac{1}{q}}
\\
\leq& C\left( \sum_{k\in \mathbb{Z}}{\left| I_{\alpha}x\left( k \right) \right|}^q\omega \left( k \right) ^q \right) ^{\frac{1}{q}}\leq C\left( \sum_{k\in \mathbb{Z}}{\left| x\left( k \right) \right|^p\omega \left( k \right) ^p} \right) ^{\frac{1}{p}}.
\end{align*}
By $\frac{1}{q}=\frac{1}{p}-\alpha$ and taking $x(k):=\omega (k) ^{-p^{\prime}}\mathcal{X} _S(k)$ and $2\lambda =|S|^{\alpha-1}\sum_{k\in S}{| x(k)|}$ into the above inequality, we further obtain
\begin{equation*}
\left( \frac{1}{\left| S \right|}\sum_{k\in S}{\omega \left( k \right) ^q} \right) ^{\frac{1}{q}}\left( \frac{1}{\left| S \right|}\sum_{k\in S}{\omega \left( k \right) ^{-p^{\prime}}} \right) ^{\frac{1}{p^{\prime}}} \leq C,
\end{equation*}
which implies $\omega \in \mathcal{A}(p,q)$.
\end{proof}

\begin{remark}\label{r4.4}
The upper pointwise estimate (\ref{eq4.8}) of $I_{\alpha}$ related to $M_{\alpha}$ plays an important role in the proof of Theorem \ref{t1.4}. In fact, (\ref{eq4.8}) has a more general upper pointwise estimate. Let $0\leq \alpha _1<\alpha <\alpha _2<1$. Repeating the estimate of (\ref{eq4.8}) with $\alpha -\epsilon =\alpha _1$ and $\alpha +\epsilon =\alpha _2$, we obtain
\begin{equation*}
\left| I_{\alpha}x\left( k \right) \right|\leq C\left[ M_{\alpha _1}x\left( k \right) \right] ^{\frac{\alpha _2-\alpha}{\alpha _2-\alpha _1}}\left[ M_{\alpha _2}x\left( k \right) \right] ^{\frac{\alpha -\alpha _1}{\alpha _2-\alpha _1}},\enspace k\in \mathbb{Z}.
\end{equation*}
\end{remark}

The following Theorem \ref{t4.5} is a discrete variant of \cite[Theorem 2]{welland}, which is an interesting estimate.

\begin{theorem}\label{t4.5}
Let $0<\alpha <1$, $p=\frac{1}{\alpha}$, $\delta >0$, $q\in (\max \{p,p^{\prime}\},\infty )$ and $\omega \in \mathcal{A}(p,\infty)$. If $x$ is supported in some symmetric interval $S_{m,N}\subset \mathbb{Z}$, $N\in \mathbb{Z_+}$ and
\begin{equation}\label{eq4.9}
\left\| x \right\| _{l_{\omega ^p}^{p}}\leq \underset{k\in S_{m,N}}{\min}\omega \left( k \right).
\end{equation}
Then there exists a positive constant $C=C_{\delta ,q,\alpha}$ such that
\begin{equation*}
\sum_{k\in S_{m,N}}{\exp}\left[ \frac{1}{2}\left| \left| I_{\alpha}x\left( k \right) \right|-\delta \right|^{p^{\prime}} \right] \omega \left( k \right) ^q\le CN\left\| x \right\| _{l_{\omega ^p}^{p}}^{q}.
\end{equation*}
\end{theorem}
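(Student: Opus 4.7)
The plan is to follow the continuous strategy of Welland \cite[Theorem 2]{welland}, combining the fractional boundedness Theorem \ref{t1.4}(i) with the self-improvement Proposition \ref{p2.3}(v) and the BMO-type estimate Theorem \ref{t3.7}. First expand the exponential in a power series:
\begin{equation*}
\sum_{k\in S_{m,N}}\exp\Bigl[\tfrac{1}{2}\bigl||I_\alpha x(k)|-\delta\bigr|^{p'}\Bigr]\omega(k)^q=\sum_{n=0}^\infty \frac{1}{2^n n!}\sum_{k\in S_{m,N}}\bigl||I_\alpha x(k)|-\delta\bigr|^{np'}\omega(k)^q.
\end{equation*}
By the elementary inequality $||t|-\delta|^{p'}\le 2^{p'-1}(|t|^{p'}+\delta^{p'})$, matters reduce to controlling the pure moments $\sum_{k\in S_{m,N}}|I_\alpha x(k)|^{np'}\omega(k)^q$ by a quantity whose product with $1/(2^nn!)$ gives a convergent series, plus the trivial zeroth-order term in $\omega(k)^q$.

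For each integer $n$ with $np'$ exceeding the base exponent $q_2$ from Proposition \ref{p2.3}(v), pick $p_n\in(1,p)$ by $1/p_n=\alpha+1/(np')$, so that $1/p_n-\alpha=1/(np')$ and $\omega\in\mathcal{A}(p_n,np')$. Since $1/p_n-1/p=1/(np')$, Theorem \ref{t1.4}(i) together with discrete Hölder's inequality on $\mathrm{supp}\,x\subset S_{m,N}$ yields
\begin{equation*}
\Bigl(\sum_{k\in\mathbb{Z}}|I_\alpha x(k)|^{np'}\omega(k)^{np'}\Bigr)^{1/(np')}\le C_n\|x\|_{l^{p_n}_{\omega^{p_n}}}\le C_n|S_{m,N}|^{1/(np')}\|x\|_{l^p_{\omega^p}}.
\end{equation*}
I would then transfer the weight exponent from $np'$ to $q$ using the normalization (\ref{eq4.9}): for $np'\ge q$, the bound $\omega(k)^{q-np'}\le(\min_{k\in S_{m,N}}\omega(k))^{q-np'}\le\|x\|_{l^p_{\omega^p}}^{q-np'}$ gives
\begin{equation*}
\sum_{k\in S_{m,N}}|I_\alpha x(k)|^{np'}\omega(k)^q\le C_n^{np'}|S_{m,N}|\,\|x\|_{l^p_{\omega^p}}^q,
\end{equation*}
while for the finitely many small $n$ with $np'<q$, I would use discrete Hölder in the reverse direction together with $\omega^{-p'}\in\mathcal{A}_1$ (Proposition \ref{p2.3}(i)); this is where the hypothesis $q>\max\{p,p'\}$ enters to keep the exponents compatible.

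Finally I would sum the resulting moment estimates against $1/(2^nn!)$ and verify convergence. A careful tracking of $C_n$ in Theorem \ref{t1.4}(i), carried out via Remark \ref{r4.4} and the pointwise inequality (\ref{eq4.8}) applied with $\epsilon$ shrinking like $1/(np')$, should give $C_n$ of polynomial order (essentially $C_n=O(n^{1/p'})$), so that $C_n^{np'}/n!$ is summable by Stirling's formula, yielding the claimed bound $CN\|x\|_{l^p_{\omega^p}}^q$. The main obstacle is precisely this quantitative control of $C_n$ as $p_n\nearrow p$, i.e.\ as one approaches the critical endpoint $p=1/\alpha$ where the fractional boundedness constant naturally degenerates; the role of $\delta$ (and the $\delta$-dependence of the final constant) is to absorb the $\delta^{p'}$ contribution coming from the triangle inequality and any boundary term from the BMO estimate of Theorem \ref{t3.7}.
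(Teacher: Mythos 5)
Your route (expand the exponential, bound the moments $\sum_{k\in S_{m,N}}|I_\alpha x(k)|^{np'}\omega(k)^q$ one at a time via Theorem \ref{t1.4}(i) with $q_n=np'\to\infty$, then resum) is genuinely different from the paper's, but it contains a gap that you yourself flag and do not close, and which I do not believe can be closed in this form. Everything hinges on the operator norms $C_n$ of $I_\alpha\colon l^{p_n}_{\omega^{p_n}}\to l^{np'}_{\omega^{np'}}$ satisfying $\sum_n C_n^{np'}/(2^n n!)<\infty$. Even granting your optimistic guess $C_n\le C_0\,n^{1/p'}$ (which is not proved anywhere: Theorem \ref{t1.4}(i) as stated carries no quantitative dependence of its constant on $p$ and $q$, and extracting $C_n\sim n^{1/p'}$ would mean redoing its proof with $\epsilon\sim 1/(np')$ while also tracking the $\mathcal{A}(p_n,np')$ characteristic of $\omega$ as $n\to\infty$), Stirling gives $C_n^{np'}/(2^n n!)\approx (C_0^{p'}e/2)^n n^{-1/2}$, which converges only when $C_0^{p'}<2/e$. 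The coefficient $\tfrac12$ in the exponent of the theorem is fixed, while $C_0$ depends on $\alpha$ and on $\|\omega\|_{\mathcal{A}(p,\infty)(\mathbb{Z})}$ and has no reason to be that small; at best your method would prove the statement with $\tfrac12$ replaced by an unspecified small $\beta>0$. This is the classical limitation of the power-series approach to Trudinger-type inequalities. A second defect: after the crude bound $||t|-\delta|^{p'}\le 2^{p'-1}(|t|^{p'}+\delta^{p'})$, the pure-$\delta$ contribution produces a term of size $c_\delta\sum_{k\in S_{m,N}}\omega(k)^q$, and nothing in the hypotheses lets you dominate $\sum_{k\in S_{m,N}}\omega(k)^q$ by $CN\|x\|^q_{l^p_{\omega^p}}$, since (\ref{eq4.9}) is an upper bound on $\|x\|_{l^p_{\omega^p}}$, not a lower one.

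The paper avoids all of this by a Hedberg-type pointwise argument following Welland. Set $\epsilon=1/q$ and split $I_\alpha x(k)$ at a radius $R=R(k)\in(0,2N]$: the near part is at most $C_\alpha R^\epsilon M_{\alpha-\epsilon}x(k)$, and the far part is, by H\"older's inequality, the identity $(\alpha-1)p'=-1$ and the normalization (\ref{eq4.9}), at most $[2\log^+(2N/R)]^{1/p'}$ with no weight factor left over. Choosing $R^\epsilon=\min\{\delta[C_\alpha M_{\alpha-\epsilon}x(k)]^{-1},(2N)^\epsilon\}$ makes the near part at most $\delta$, so exponentiating $\tfrac12||I_\alpha x(k)|-\delta|^{p'}$ exactly cancels the logarithm and yields the pointwise bound $\exp[\tfrac12||I_\alpha x(k)|-\delta|^{p'}]\le 2N\delta^{-q}C_\alpha^q(M_{\alpha-\epsilon}x(k))^q$; a single application of (\ref{eq1.1}) for $M_{\alpha-\epsilon}$ with $\omega\in\mathcal{A}(p,q)$ then finishes. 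This is where $\delta$ and (\ref{eq4.9}) really enter, and it is why no summation over moments, and hence no delicate constant tracking, is needed. I recommend abandoning the series expansion in favor of this splitting.
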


\begin{proof}
Let $S:=S_{m,N}$. For given $q\in (\max \{p,p^{\prime}\},\infty )$, define $\epsilon :=\frac{1}{q}$, where $\epsilon \in (0,\mathrm{min}\{\alpha,1-\alpha\})$ and $\frac{1}{q}=\frac{1}{p}-(\alpha -\epsilon)$. Then for $\omega\in \mathcal{A}(p,\infty)$, by the definition of $\omega\in \mathcal{A}(p,q)$, we have $\omega\in \mathcal{A}(p,q)$. Let $R\in (0,2N]$ to be fixed later. By checking the proof of Theorem \ref{t1.4}, we have $I_{\alpha}x(k) =:\mathrm{I+II}$ and
\begin{equation*}
|\mathrm{I}|\le C_{\alpha}R^{\epsilon}M_{\alpha -\epsilon}x\left( k \right).
\end{equation*}
For $k\in S$, by $\mathrm{supp}\,x\subset S$, discrete H\"older's inequality and $(\alpha -1) p^{\prime}=-1$, we obtain
\begin{align*}
\left| \mathrm{II} \right|\leq& \sum_{\left\{ i\in \mathbb{Z} :R< \left| k-i \right|\leq 2N \right\}}{\frac{\left| x\left( i \right) \right|}{\left| k-i \right|^{1-\alpha}}}
\\
\leq& \left( \underset{i\in S}{\mathrm{sup}}\,\,\frac{1}{\omega \left( i \right)} \right) \left( \sum_{\left\{ i\in \mathbb{Z} :R<\left| k-i \right|\le 2N \right\}}{\left| x\left( i \right) \omega \left( i \right) \right|^p} \right) ^{\frac{1}{p}}\left( \sum_{\left\{ i\in \mathbb{Z} :R<\left| k-i \right|\le 2N \right\}}{\left| k-i \right|^{\left( \alpha -1 \right) p^{\prime}}} \right) ^{\frac{1}{p^{\prime}}}
\\
\leq& \left( \underset{i\in S}{\mathrm{sup}}\,\,\frac{1}{\omega \left( i \right)} \right) \left\| x \right\| _{l_{\omega ^p}^{p}}\left( 2\sum_{\left\{ j\in \mathbb{Z} :R< j\leq 2N \right\}}{j^{\left( \alpha -1 \right) p^{\prime}}} \right) ^{\frac{1}{p^{\prime}}}
\\
\leq& \left( 2\sum_{\left\{ j\in \mathbb{Z} :R< j\leq 2N \right\}}{j^{-1}} \right) ^{\frac{1}{p^{\prime}}} \leq \left( 2\int_R^{2N}{y^{-1}dy} \right) ^{\frac{1}{p^{\prime}}}\leq \left[ 2\log ^+\left( \frac{2N}{R} \right) \right] ^{\frac{1}{p^{\prime}}}.
\end{align*}
For $\delta>0$, choose $R^{\epsilon}=\min \left\{ \delta [C_{\alpha}M_{\alpha -\epsilon}x(k)]^{-1},\left( 2N \right) ^{\epsilon} \right\}$. Noting that $\frac{1}{\epsilon}=q$, then
\begin{equation*}
\left| I_{\alpha}x\left( k \right) \right|\leq \delta +\left\{ 2\log ^+\left[ 2N\delta ^{-q}{C_{\alpha}}^q\left( M_{\alpha -\epsilon}x\left( k \right) \right) ^q \right] \right\} ^{\frac{1}{p^{\prime}}}.
\end{equation*}
Therefore,
\begin{equation}\label{eq4.10}
\exp \left[ \frac{1}{2}\left| \left| I_{\alpha}x\left( k \right) \right|-\delta \right|^{p^{\prime}} \right] \le 2N\delta ^{-q}{C_{\alpha}}^q\left( M_{\alpha -\epsilon}x\left( k \right) \right) ^q.
\end{equation}
Multiply both sides of (\ref{eq4.10}) by $\omega (k)^q$, sum both sides on $k\in S$ and apply (\ref{eq1.1}) for the operator $M_{\alpha -\epsilon}$ with $\omega\in \mathcal{A}(p,q)$, we have
\begin{align*}
&\sum_{k\in S}{\exp \left[ \frac{1}{2}\left| \left| I_{\alpha}x\left( k \right) \right|-\delta \right|^{p^{\prime}} \right]}\omega \left( k \right) ^q \leq 2N\delta ^{-q}{C_{\alpha}}^q\sum_{k\in S}{\left| M_{\alpha -\epsilon}x\left( k \right) \right|^q}\omega \left( k \right) ^q
\\
\leq& CN\left( \sum_{k\in S}{\left| x\left( k \right) \omega \left( k \right) \right|^p} \right) ^{\frac{q}{p}}\leq CN\left\| x \right\| _{l_{\omega ^p}^{p}}^{q}.
\end{align*}
We finish the proof of Theorem \ref{t4.5}.
\end{proof}

\begin{remark}\label{r4.6}
The condition (\ref{eq4.9}) in Theorem \ref{t4.5} is not hard to verify. In fact, for any sequence $x$ supported in some symmetric interval $S_{m,N}\subset \mathbb{Z}$, $N\in \mathbb{Z_+}$, then for $\widetilde{x}(k):=(\underset{k\in S_{m,N}}{\min}\omega(k)) x(k) \|x\| _{l_{\omega ^p}^{p}}^{-1}$, we have $\|\widetilde{x}\| _{l_{\omega ^p}^{p}}\leq\underset{k\in S_{m,N}}{\mathrm{min}}\ \omega(k)$.
\end{remark}

%\bigskip\medskip
%
%\noindent Xiong Hu, Xuebing Hao and Baode Li (Corresponding author),
%\medskip
%
%\noindent College of Mathematics and System Sciences\\
%Xinjiang University\\
%Urumqi, 830017\\
%P. R. China
%\smallskip
%
%\noindent{E-mail }:\\
%\texttt{xionghu92@qq.com} (Xiong Hu)\\
%\texttt{1659230998@qq.com} (Xuebing Hao)\\
%\texttt{baodeli@xju.edu.cn} (Baode Li)\\
%\bigskip \medskip

\end{document}